\theoremstyle:=definition,remark,plain\do{%
        \expandafter\g@addto@macro\csname th@\theoremstyle\endcsname{%
            \addtolength\thm@preskip\parskip
            }%
        }
\theoremstyle{definition}
\newtheorem{theorem}{Theorem}
\newtheorem{conjecture}[theorem]{Conjecture}
\newtheorem{question}{Question}
\numberwithin{theorem}{section}
\numberwithin{question}{section}
\numberwithin{definition}{section}
\numberwithin{example}{section}
\numberwithin{remark}{section}
\newcommand*\patchAmsMathEnvironmentForLineno[1]{%
  \expandafter\let\csname old#1\expandafter\endcsname\csname #1\endcsname
  \expandafter\let\csname oldend#1\expandafter\endcsname\csname end#1\endcsname
  \renewenvironment{#1}%
     {\linenomath\csname old#1\endcsname}%
     {\csname oldend#1\endcsname\endlinenomath}}%
\newcommand*\patchBothAmsMathEnvironmentsForLineno[1]{%
  \patchAmsMathEnvironmentForLineno{#1}%
  \patchAmsMathEnvironmentForLineno{#1*}}%
\definecolor{todocolor}{RGB}{205,235,139}
\definecolor{todo-idea}{RGB}{120,180,255}
\definecolor{todo-error}{RGB}{208,31,60}
\definecolor{todo-question}{RGB}{255,255,136}
\newcommand{\Av}{\operatorname{Av}}
\newcommand{\CC}{\mathcal{C}}
\newcommand{\DD}{\mathcal{D}}
\newcommand{\JJ}{\mathcal{J}}
\newcommand{\GG}{\mathcal{G}}
\DeclareMathOperator{\perm}{perm}
\newcommand{\gk}[1]{GK.{#1}}
\newcommand{\pplink}[1]{\footnote{\url{https://permpal.com/perms/id/#1/}}}
\newcommand{\gkoneposet}{\
\begin{tikzpicture}[scale=0.4, baseline=(current bounding box.center)]
	\draw[fill] (0,0) circle (6pt) node[below=1pt] {$2$};
	\draw[fill] (0.7,0) circle (6pt) node[below=1pt] {$3$};
	\draw[fill] (1.4, 0) circle (6pt) node[below=1pt] {$4$};
	\draw[fill] (0.7, 0.8) circle (6pt) node[shift={(-6pt,3pt)}] {$1$};
	\draw[fill] (0.7,1.6) circle (6pt) node[shift={(-6pt,4pt)}] {$5$};
	\draw[ultra thick] (0, 0) -- (0.7, 0.8);
	\draw[ultra thick] (1.4, 0) -- (0.7, 0.8);
	\draw[ultra thick] (0.7, 0.8) -- (0.7,1.6);
\end{tikzpicture}\
}
\newcommand{\broom}[5]{\
\begin{tikzpicture}[scale=0.4, baseline=(current bounding box.center)]]
	\draw[fill] (0,0) circle (6pt) node[below=1pt] {$#1$};
	\draw[fill] (0.7,0) circle (6pt) node[below=1pt] {$#2$};
	\draw[fill] (1.4, 0) circle (6pt) node[below=1pt] {$#3$};
	\draw[fill] (0.7, 0.8) circle (6pt) node[shift={(-6pt,3pt)}] {$#4$};
	\draw[fill] (0.7,1.6) circle (6pt) node[shift={(-6pt,4pt)}] {$#5$};
	\draw[ultra thick] (0, 0) -- (0.7, 0.8);
	\draw[ultra thick] (0.7, 0) -- (0.7, 0.8);
	\draw[ultra thick] (1.4, 0) -- (0.7, 0.8);
	\draw[ultra thick] (0.7, 0.8) -- (0.7,1.6);
\end{tikzpicture}\
}
\newcommand{\gktwoposet}{\broom{2}{3}{4}{1}{5}}
\newcommand{\widediamond}[5]{\
\begin{tikzpicture}[scale=0.4, baseline=(current bounding box.center)]
	\draw[fill] (1.6, 0)   circle (6pt) node[shift={(-6pt, -4pt)}] {$#1$};
	\draw[fill] (0.0, 0.9) circle (6pt) node[left] {$#2$};
	\draw[fill] (1.6, 0.9) circle (6pt) node[left] {$#3$};
	\draw[fill] (3.2, 0.9) circle (6pt) node[right] {$#4$};
	\draw[fill] (1.6, 1.8) circle (6pt) node[shift={(-6pt, 4pt)}] {$#5$};
	\draw[ultra thick] (1.6, 0)   -- (0,   0.9);
	\draw[ultra thick] (1.6, 0)   -- (1.6, 0.9);
	\draw[ultra thick] (1.6, 0)   -- (3.2, 0.9);
	\draw[ultra thick] (0  , 0.9) -- (1.6, 1.8);
	\draw[ultra thick] (1.6, 0.9) -- (1.6, 1.8);
	\draw[ultra thick] (3.2, 0.9) -- (1.6, 1.8);
\end{tikzpicture}\
}
\newcommand{\treepop}[5]{\
    \begin{tikzpicture}[scale=0.4, baseline=(current bounding box.center)]
        \draw[fill] (0, -1.6) circle (6pt) node[left] {$#1$};
        \draw[fill] (1.4,-1.6) circle (6pt) node[right] {$#2$};
        \draw[fill] (-0.7, -0.8) circle (6pt) node[left] {$#3$};
        \draw[fill] (0.7, -0.8) circle (6pt) node[shift={(7pt, 4pt)}] {$#4$};
        \draw[ultra thick] (0, 0) -- (1.4, -1.6);
        \draw[ultra thick] (0, 0) -- (-0.7, -0.8);
        \draw[ultra thick] (0.7, -0.8) -- (0, -1.6);
        \draw[fill] (0,0) circle (6pt) node[shift={(7pt, 4pt)}] {$#5$};
    \end{tikzpicture}\
}
\newcommand{\treepopshifted}[5]{\
    \begin{tikzpicture}[scale=0.4, baseline={([yshift=-0.5ex]current bounding box.center)}]
        \draw[fill] (0, -1.6) circle (6pt) node[left] {$#1$};
        \draw[fill] (1.4,-1.6) circle (6pt) node[right] {$#2$};
        \draw[fill] (-0.7, -0.8) circle (6pt) node[left] {$#3$};
        \draw[fill] (0.7, -0.8) circle (6pt) node[shift={(7pt, 4pt)}] {$#4$};
        \draw[ultra thick] (0, 0) -- (1.4, -1.6);
        \draw[ultra thick] (0, 0) -- (-0.7, -0.8);
        \draw[ultra thick] (0.7, -0.8) -- (0, -1.6);
        \draw[fill] (0,0) circle (6pt) node[shift={(7pt, 4pt)}] {$#5$};
    \end{tikzpicture}\
}
\newcommand{\gkthreeposet}{\treepopshifted{4}{5}{2}{3}{1}}
\newcommand{\fishpop}[5]{\
	\begin{tikzpicture}[scale=0.4, baseline=(current bounding box.center)]
        \draw[fill] (0,0) circle (6pt) node[shift={(-6pt, -3pt)}] {$#1$};
        \draw[fill] (-0.7, 0.8) circle (6pt) node[left] {$#2$};
        \draw[fill] (0, 0.8) circle (6pt) node[shift={(7pt, 0pt)}] {$#3$};
        \draw[fill] (2, 0.8) circle (6pt) node[right] {$#4$};
        \draw[fill] (0, 1.6) circle (6pt) node[shift={(-6pt,3pt)}] {$#5$};
        \draw[ultra thick] (0, 0) -- (0, 1.6);
        \draw[ultra thick] (0, 0) -- (-0.7, 0.8);
        \draw[ultra thick] (0, 0) -- (2, 0.8);
        \draw[ultra thick] (2, 0.8) -- (0, 1.6);
    \end{tikzpicture}\
}
\newcommand{\xpop}[5]{\
    \begin{tikzpicture}[scale=0.4, baseline=(current bounding box.center)]
        \draw[fill] (0,0) circle (6pt) node[left] {$#1$};
        \draw[fill] (1.6, 0) circle (6pt) node[right] {$#2$};
        \draw[fill] (0.8, 1) circle (6pt) node[left=1pt] {$#3$};
        \draw[fill] (0, 2) circle (6pt) node[left] {$#4$};
        \draw[fill] (1.6,2) circle (6pt) node[right] {$#5$};
        \draw[ultra thick] (0, 0) -- (1.6, 2);
        \draw[ultra thick] (1.6, 0) -- (0, 2);
    \end{tikzpicture}\
}
\newcommand{\xpopshifted}[5]{\
    \begin{tikzpicture}[scale=0.4, baseline={([yshift=-0.5ex]current bounding box.center)}]
        \draw[fill] (0,0) circle (6pt) node[left] {$#1$};
        \draw[fill] (1.6, 0) circle (6pt) node[right] {$#2$};
        \draw[fill] (0.8, 1) circle (6pt) node[left=1pt] {$#3$};
        \draw[fill] (0, 2) circle (6pt) node[left] {$#4$};
        \draw[fill] (1.6,2) circle (6pt) node[right] {$#5$};
        \draw[ultra thick] (0, 0) -- (1.6, 2);
        \draw[ultra thick] (1.6, 0) -- (0, 2);
    \end{tikzpicture}\
}
\newcommand{\gkfourposet}{\xpopshifted{3}{4}{5}{1}{2}}
\newcommand{\housepop}[5]{\
    \begin{tikzpicture}[scale=0.4, baseline=(current bounding box.center)]
        \draw[fill] (0,0) circle (6pt) node[left] {$#1$};
        \draw[fill] (1.6, 0) circle (6pt) node[right] {$#2$};
        \draw[fill] (0, 1) circle (6pt) node[left] {$#3$};
        \draw[fill] (1.6, 1) circle (6pt) node[right] {$#4$};
        \draw[fill] (0.8,2) circle (6pt) node[right=1pt] {$#5$};
        \draw[ultra thick] (0, 0) -- (0, 1) -- (0.8, 2);
        \draw[ultra thick] (1.6, 0) -- (1.6, 1) -- (0.8, 2);
        \draw[ultra thick] (0, 0) -- (1.6, 1);
        \draw[ultra thick] (1.6, 0) -- (0, 1);
    \end{tikzpicture}\
}
\newcommand{\flagpopshifted}[5]{\
    \begin{tikzpicture}[scale=0.4, baseline={([yshift=-0.5ex]current bounding box.center)}]
        \draw[fill] (0,0) circle (6pt) node[left] {$#1$};
        \draw[fill] (0, 1) circle (6pt) node[left] {$#2$};
        \draw[fill] (1, 1) circle (6pt) node[right] {$#3$};
        \draw[fill] (0, 2) circle (6pt) node[left] {$#4$};
        \draw[fill] (1,2) circle (6pt) node[right] {$#5$};
        \draw[ultra thick] (0, 0) -- (0, 2);
        \draw[ultra thick] (1, 1) -- (1, 2);
        \draw[ultra thick] (0, 1) -- (1, 2);
        \draw[ultra thick] (1, 1) -- (0, 2);
    \end{tikzpicture}\
}
\newcommand{\gkfiveposet}{\flagpopshifted{3}{5}{4}{2}{1}}
\newcommand{\gksixposet}{\
    \begin{tikzpicture}[scale=0.4, baseline={([yshift=-0.5ex]current bounding box.center)}]
        \draw[fill] (0,0) circle (6pt) node[left=2pt] {$3$};
        \draw[fill] (1, 0) circle (6pt) node[right=2pt] {$4$};
        \draw[fill] (0.5, 0.8) circle (6pt) node[right=2pt] {$5$};
        \draw[fill] (0.5, 1.6) circle (6pt) node[right=2pt] {$1$};
        \draw[fill] (0.5, 2.4) circle (6pt) node[right=2pt] {$2$};
        \draw[ultra thick] (0, 0) -- (0.5, 0.8);
        \draw[ultra thick] (1, 0) -- (0.5, 0.8);
        \draw[ultra thick] (0.5, 0.8) -- (0.5, 2.4);
    \end{tikzpicture}\
}
\newcommand{\mclass}[6]{%
\begin{tikzpicture}[scale=0.4, baseline=(current bounding box.center)]
    \draw[fill] (0,0) circle (6pt) node[left] {\scriptsize $#1$};
    \draw[fill] (0.6,0.8) circle (6pt) node[left] {\scriptsize $#2$};
    \draw[fill] (1.2,0) circle (6pt) node[left] {\scriptsize $#3$};
    \draw[fill] (1.8,0.8) circle (6pt) node[left] {\scriptsize $#4$};
    \draw[fill] (2.4,0) circle (6pt) node[left] {\scriptsize $#5$};
    \draw[ultra thick] (0, 0) -- (0.6, 0.8) -- (1.2, 0) -- (1.8,0.8) -- (2.4,0);
    \node at (1.2, -1) {\small \texttt{#6}};
\end{tikzpicture}}
\newcommand{\sixclass}[6]{%
\begin{tikzpicture}[scale=0.4, baseline=(current bounding box.center)]
	\draw[fill] (0,0) circle (6pt) node[left] {\scriptsize $#1$};
	\draw[fill] (0.6,0.8) circle (6pt) node[left] {\scriptsize $#2$};
	\draw[fill] (1.2,0) circle (6pt) node[left] {\scriptsize $#3$};
	\draw[fill] (1.8,0.8) circle (6pt) node[left] {\scriptsize $#4$};
	\draw[fill] (2.4,0) circle (6pt) node[left] {\scriptsize $#5$};
    \draw[fill] (3,0.8) circle (6pt) node[left] {\scriptsize $#6$};
	\draw[ultra thick] (0, 0) -- (0.6, 0.8) -- (1.2, 0) -- (1.8,0.8) -- (2.4,0) -- (3,0.8);
\end{tikzpicture}}
\renewenvironment{abstract}{
	\begin{list}{}%
	{\setlength{\rightmargin}{1in}%
	\setlength{\leftmargin}{1in}}%
	\item[]\ignorespaces\begin{small}}%
	{\end{small}\unskip\end{list}%
}
\title{Permutations avoiding bipartite partially ordered patterns have a regular insertion encoding}
\author{
	\begin{tabular}{m{2.5in}m{2.5in}}
		\makecell{
			Christian Bean\\
			\small School of Computer Science\\
			\small and Mathematics\\
			\small Keele University\\
			\small Keele, United Kingdom\\
			\small \texttt{c.n.bean@keele.ac.uk}
		}&
		\makecell{
			\'Emile Nadeau\\
			\small Department of Computer Science\\
			\small Reykjavik University\\
			\small Reykjavik, Iceland\\
		}\\&\\
		\makecell{
			Jay Pantone\\
			\small Department of Mathematical\\
			\small and Statistical Sciences\\
			\small Marquette University\\
			\small Milwaukee, WI, USA\\
			\small \texttt{jay.pantone@marquette.edu}
		}
		&
		\makecell{
			Henning Ulfarsson\\
			\small Department of Computer Science\\
			\small Reykjavik University\\
			\small Reykjavik, Iceland\\
			\small \texttt{henningu@ru.is}
		}
	\end{tabular}
}
\date{}
\begin{document}
\maketitle

\begin{abstract}
	We prove that any class of permutations defined by avoiding a partially ordered pattern (POP) with height at most two has a regular insertion encoding and thus has a rational generating function. Then, we use Combinatorial Exploration to find combinatorial specifications and generating functions for hundreds of other permutation classes defined by avoiding a size $5$ POP, allowing us to resolve several conjectures of Gao and Kitaev and of Chen and Lin.
\end{abstract}


\section{Introduction}

A \emph{permutation} of size $n$ is a linear ordering of the elements $\{1, \ldots, n\}$. When we want to refer to individual entries in a permutation $\pi$ of size $n$, we write $\pi = \pi(1)\pi(2)\cdots\pi(n)$. We say that a permutation $\pi$ of size $n$ \emph{contains} a permutation $\sigma$ of size $k \leq n$ if $\pi$ contains a (not necessarily consecutive) subword $\pi(i_1)\pi(i_2)\cdots\pi(i_k)$ with $i_1 < i_2 < \cdots < i_k$ such that $\pi(i_\ell) < \pi(i_m)$ if and only if $\sigma(\ell) < \sigma(m)$; in other words, this subword is order-isomorphic to $\sigma$. In this context, we often refer to $\sigma$ as a \emph{pattern}. For example, the permutation $3714652$ of size $7$ contains the pattern $132$ because the subword $\pi(3)\pi(6)\pi(7) = 152$ is order-isomorphic to $132$. There are eight other occurrences of $132$ in $\pi$ as well. On the other hand, $\pi$ \emph{avoids} (i.e., does not contain) the pattern $3124$.

This containment relation on permutations defines an infinite poset on the set of all permutations of all sizes. Sets of permutations that are downward closed in this poset are called \emph{permutation classes}. Each permutation class can be uniquely described by the (possibly infinite) minimal set of permutations that it does not contain, called its \emph{basis}. We use the notation $\Av(B)$ to denote the class with basis $B$. Permutation classes have been extensively studied; we refer the interested reader to Kitaev's book on the subject~\cite{kitaev:pp-book} and Vatter's comprehensive survey~\cite{vatter:perm-survey}.

A \emph{partially ordered pattern (POP)} of size $k$ is a poset on $k$ elements labeled with the symbols $\{1, \ldots, k\}$. We say that a permutation $\pi$ of size $n$ contains a POP $P$ of size $k$ if $\pi$ contains a (not necessarily consecutive) subword $\pi(i_1)\pi(i_2)\cdots\pi(i_k)$ with $i_1 < i_2 < \cdots < i_k$ such that $\pi(i_\ell) < \pi(i_m)$ if $\ell < m$ in $P$. Compare this to the definition of pattern avoidance above; for two elements $\ell$ and $m$ that are incomparable in $P$, there is no restriction on the relative order of the values $\pi(i_\ell)$ and $\pi(i_m)$ in an occurrence of the pattern $P$ in $\pi$.

Consider for example the poset $P = \begin{tikzpicture}[scale=0.4, baseline=(current bounding box.center)]
		\draw[fill] (0,0) circle (6pt) node[left=4pt] {$3$};
		\draw[fill] (0, 0.8) circle (6pt) node[left=4pt] {$1$};
		\draw[fill] (-0.7, 1.6) circle (6pt) node[left=4pt] {$2$};
		\draw[fill] (0.7,1.6) circle (6pt) node[right=4pt] {$4$};
		\draw[ultra thick] (0, 0) -- (0, 0.8) -- (-0.7,1.6);
		\draw[ultra thick] (0, 0.8) -- (0.7,1.6);
	\end{tikzpicture}$. The permutation $\pi = 4726135$ contains the subword $\pi(3)\pi(4)\pi(5)\pi(7) = 2615$ at indices $(i_1, i_2, i_3, i_4) = (3,4,5,7)$ which satisfies the condition above. For instance, $3 < 1$ in $P$ and $\pi(i_3) < \pi(i_1)$. Since $2$ and $4$ are incomparable in $P$, we do not require either $\pi(i_2) < \pi(i_4)$ or $\pi(i_2) > \pi(i_4)$. In fact, swapping $\pi(i_2)$ and $\pi(i_4)$ gives a new permutation $\pi' = 4725136$ that also contains an occurrence $P$ at the same indices.

If $\sigma$ is a permutation that contains a POP $P$ and $\pi$ is a permutation that contains $\sigma$, then $\pi$ also contains $P$. This implies that the set of permutations that avoid a given POP is downward closed in the permutation containment order and therefore is a permutation class. We use the notation $\Av(P)$ to denote the class of permutations avoiding the POP $P$. It will always be clear in context whether we are using this $\Av(P)$ notation or the earlier $\Av(B)$ notation for the permutations avoiding the basis $B$.

It turns out that it is easy to compute the basis of $\Av(P)$, as Gao and Kitaev~\cite{gao:partially-ordered-patterns} implicitly observe. A \emph{linear extension} of a poset is a total order on its elements that obeys the relations of the poset.
\begin{theorem}
	\label{theorem:basis}
	For a POP $P$, the basis of the permutation class $\Av(P)$ is the set of permutations that are the (group-theoretic) inverse of a linear extension of $P$.
\end{theorem}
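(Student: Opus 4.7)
The plan is to reduce POP-containment to ordinary permutation-pattern containment, and then observe that the ``allowed'' patterns are precisely the inverses of linear extensions of $P$.

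First, I would unpack the definition. A permutation $\pi$ contains $P$ iff there exist indices $i_1 < \cdots < i_k$ such that $\pi(i_\ell) < \pi(i_m)$ whenever $\ell <_P m$. Let $\tau \in S_k$ be the standardization of the subword $\pi(i_1)\cdots\pi(i_k)$. The witness condition on $\pi$ translates verbatim to a condition on $\tau$ alone: $\ell <_P m \implies \tau(\ell) < \tau(m)$. So $\pi$ contains $P$ if and only if $\pi$ contains, as an ordinary permutation pattern, some $\tau \in S_k$ satisfying this implication. In particular, containing $P$ is equivalent to containing some member of the finite set $T_P := \{\tau \in S_k : \ell <_P m \implies \tau(\ell) < \tau(m)\}$.

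Second, I would identify $T_P$ with the inverses of linear extensions. Setting $\sigma = \tau^{-1}$, the condition rewrites as: for all $a, b$, if $\sigma(a) <_P \sigma(b)$ then $a < b$. Equivalently, the one-line word $\sigma(1)\sigma(2)\cdots\sigma(k)$ lists the labels of $P$ in an order compatible with $<_P$, i.e.\ $\sigma$ encodes a linear extension of $P$, and every linear extension arises this way. Hence $T_P = \{\sigma^{-1} : \sigma \text{ is a linear extension of } P\}$.

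Finally, I would close the loop to obtain the basis description. Every element of $T_P$ has size $k$, so $T_P$ is automatically an antichain under pattern containment. Each $\tau \in T_P$ contains $P$ (using the witness $i_j = j$). Conversely, any $\pi$ containing $P$ contains some $\tau \in T_P$ by the first step. Therefore $T_P$ is exactly the set of minimal permutations containing $P$: if $\rho$ is such a minimum it contains some $\tau \in T_P$ which also contains $P$, forcing $\rho = \tau$; and each $\tau \in T_P$ is minimal because any $\rho \subsetneq \tau$ containing $P$ would contain some $\tau' \in T_P$ of size $k$, giving $|\rho| \geq k = |\tau|$, contradicting proper containment. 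Thus the basis of $\Av(P)$ is $T_P$, which is exactly the claimed set.

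The only subtlety is the bookkeeping in the second step: the POP condition naturally constrains the \emph{values} $\tau(\ell)$ indexed by labels of $P$, whereas a linear extension is most naturally described by listing labels in extension order, which corresponds to \emph{positions} in $\sigma$. Keeping this inversion straight is the one place where a careless reader could conflate linear extensions with their inverses; once tracked, the remainder is a direct unraveling of definitions.
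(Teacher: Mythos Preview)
Your proof is correct and follows essentially the same route as the paper: both arguments show that containing $P$ is equivalent to containing some size-$k$ permutation $\tau$ with $\ell <_P m \Rightarrow \tau(\ell) < \tau(m)$, and then identify such $\tau$ as exactly the inverses of linear extensions of $P$. Your write-up is slightly more explicit than the paper's in justifying that this set is an antichain (hence actually the basis), but the underlying decomposition and the handling of the inverse bookkeeping are the same.
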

Before proving Theorem~\ref{theorem:basis}, we present an example demonstrating why inverses of permutations need to be considered. Let $P = \begin{tikzpicture}[scale=0.4, baseline=(current bounding box.center)]
		\draw[fill] (0,0) circle (6pt) node[left=3pt] {\scriptsize $2$};
		\draw[fill] (0, 0.8) circle (6pt) node[left=3pt] {\scriptsize $3$};
		\draw[fill] (0, 1.6) circle (6pt) node[left=3pt] {\scriptsize $1$};
		\draw[fill] (0, 2.4) circle (6pt) node[left=3pt] {\scriptsize $4$};
		\draw[ultra thick] (0, 0) -- (0, 2.4);
	\end{tikzpicture}$. As $P$ is already a total order, its only linear extension is itself. If a permutation $\pi$ contains $P$ at the indices $i_1 < i_2 < i_3 < i_4$, the relations $2 < 3 < 1 < 4$ in $P$ imply that $\pi(i_2) < \pi(i_3) < \pi(i_1) < \pi(i_4)$. Therefore $\pi$ contains the pattern $2314^{-1} = 3124$ at these indices. This shows that $\Av(P) = \Av(3124)$.

\begin{proof}
	Let $P$ be a POP and let $\{L_1, L_2, \ldots, L_N\}$ be the set of linear extensions of $P$. We first claim that a permutation avoids $P$ if and only if it avoids each $L_i$. To see this, first assume that the permutation $\pi$ contains an occurrence of $P$ at the subword $\pi(i_1)\pi(i_2)\cdots\pi(i_k)$. Reorder these entries so that $\pi(i_{\ell_1}) < \pi(i_{\ell_2}) < \cdots < \pi(i_{\ell_k})$. This shows that the subword $\pi(i_1)\pi(i_2)\cdots\pi(i_k)$ is an occurrence of total order $T = \ell_1 < \ell_2 < \cdots < \ell_k$. If $a < b$ in $P$, then $\pi(i_a) < \pi(i_b)$ which implies that $a < b$ in $T$. Therefore $T$ is a linear extension of $P$. Conversely, assume $\pi$ contains some linear extension $T$ of $P$. An occurrence of $T$ in $\pi$ is also an occurrence of $P$ because the relations of $P$ are a subset of the relations of $T$. This completes our first claim, and we can now conclude that the set of permutations avoiding $P$ is equal to the set of permutations that avoid all linear extensions of $P$.

	For a total order $L = \ell_1 < \ell_2 < \cdots < \ell_k$, we define $\perm(L)$ to be the permutation $(\ell_1\ell_2\cdots\ell_k)^{-1}$. We now claim that $\pi$ contains the total order $T = \tau(1) < \tau(2) < \cdots < \tau(k)$ if and only if $\pi$ contains the pattern $\perm(T)$. To this end, assume $\pi(i_1)\pi(i_2)\cdots \pi(i_k)$ is an occurrence of $T$, so that $\pi(i_{\tau(1)}) < \pi(i_{\tau(2)}) < \cdots < \pi(i_{\tau(k)})$. Let $\sigma$ denote the pattern contained in $\pi$ at these same indices. Since the smallest entry of this occurrence is $\pi(i_{\tau(1)})$ it follows that $\sigma(\tau(1)) = 1$. Similarly, $\sigma(\tau(j)) = j$ for all $1 \leq j \leq k$. Therefore $\sigma = \tau^{-1} = \perm(T)$. Conversely, suppose $\pi(i_1)\pi(i_2)\cdots\pi(i_k)$ is an occurrence of $\sigma$. Of the entries in the subword, the one with the smallest value is $\pi(i_{\sigma^{-1}(1)})$, while the next smallest value is $\pi(i_{\sigma^{-1}(2)})$ and so on. Therefore, $\pi(i_{\sigma^{-1}(1)}) < \pi(i_{\sigma^{-1}(2)}) < \cdots < \pi(i_{\sigma^{-1}(k)})$ and therefore the subword is an occurrence of the total order $\sigma^{-1}(1) < \sigma^{-1}(2) < \cdots < \sigma^{-1}(k)$.

	From the two preceding claims, it follows that permutations avoiding $P$ are precisely those avoiding the inverse of all linear extensions of $P$, and therefore the basis of $\Av(P)$ is $\{\perm(L_1),\allowbreak \perm(L_2),\allowbreak \cdots,\allowbreak \perm(L_N)\}$.
\end{proof}

We call a permutation class $\CC$ a \emph{POP class} if $\CC = \Av(P)$ for some POP $P$. Not all permutation classes are POP classes, e.g., there is no poset $P$ such that $\Av(P) = \Av(1234, 1432)$. Theorem~\ref{theorem:basis} gives an easy test for whether a class is a POP class. Given any permutation class $\Av(B)$, form the total orders labeled by the inverses of the permutations in $B$. Let $P$ be the intersection of these total orders (i.e., the intersection of the sets of relations each implies). If $\Av(B)$ is a POP class, then $P$ must be its associated poset, and so Theorem~\ref{theorem:basis} can be used to check whether the basis of $\Av(P)$ is $B$. The Sage~\cite{sage} script in the Github repository \url{https://github.com/jaypantone/bipartite-pops} can be used to check if a permutation class is a POP class.

The number of permutation classes avoiding a subset of patterns of size $n$ is, of course, $2^{n!}$, but the number of POP classes of size $n$ is given by the sequence $1, 1, 3, 19, 219, 4231, 130023, \ldots$ (starting with $n=0$), which is A001035 in the OEIS~\cite{oeis}. Section~\ref{section:symmetries} explores the topic of the number of POP classes taking certain symmetries into account.

In 2005, Kitaev~\cite{kitaev:OG-pop} introduced the notion of \emph{partially ordered patterns} (POPs) as a way of studying generalized permutations and other objects. Later, in 2019, Gao and Kitaev~\cite{gao:partially-ordered-patterns} studied the version of POP classes that we defined above, in particular determining the enumeration of several infinite families of POP classes and many specific POP classes whose posets have size $4$. They included several tables of conjectural links between POP classes whose posets have size $4$ or $5$ and other combinatorial objects.  Several of these conjectures were later proved by Yap, Wehlau, and Zaguia~\cite{yap:pops}. Recently, Kitaev and Pyatkin~\cite{kitaev:pop2}, among other things, enumerated many POP classes whose poset has height (length of the longest chain) at most $2$. They call such posets \emph{bipartite} and so we will use the same terminology, but this should not be confused with the notion of a bipartite graph. Every bipartite poset has a Hasse diagram whose graph is bipartite, but not every poset with a bipartite Hasse graph has height at most $2$.

In Section~\ref{section:rie}, we show that the generating function of any POP class whose poset is bipartite can be algorithmically computed using techniques from the field of permutation patterns, proving the following theorem.
\begin{theorem}
	\label{theorem:rie}
	A POP class has a regular insertion encoding if and only if it is bipartite.
\end{theorem}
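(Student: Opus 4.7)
The plan is to invoke Vatter's characterization of classes with a regular insertion encoding (RIE): $\Av(B)$ has an RIE if and only if the basis $B$ contains a permutation from each of four explicit ``staircase'' pattern families, each corresponding to one of the four ways in which the insertion encoding's slot count can grow unboundedly. Combined with Theorem~\ref{theorem:basis}, which identifies the basis of $\Av(P)$ as the set of inverses of linear extensions of $P$, the problem reduces to a structural question about these inverses.

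For the forward direction ($P$ bipartite $\Rightarrow$ RIE), suppose $P$ is bipartite with minimal labels $\mathcal{M}$ and maximal labels $\mathcal{N}$. Then any linear order on $\mathcal{M}$ followed by any linear order on $\mathcal{N}$ is automatically a valid linear extension of $P$; its inverse is a basis element whose values at the positions in $\mathcal{M}$ form a freely-chosen permutation of $\{1, \ldots, |\mathcal{M}|\}$ and whose values at the positions in $\mathcal{N}$ form a freely-chosen permutation of $\{|\mathcal{M}|+1, \ldots, |P|\}$. I would use this freedom to produce, for each of Vatter's four families, a basis element containing (as a pattern) an element of that family, thereby verifying Vatter's condition.

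For the reverse direction (non-bipartite $\Rightarrow$ no RIE), if $P$ contains a chain $a \prec b \prec c$ with labels $\ell_a, \ell_b, \ell_c$, then every linear extension $L$ of $P$ satisfies $L^{-1}(\ell_a) < L^{-1}(\ell_b) < L^{-1}(\ell_c)$. Hence every basis element has an increasing triple at the fixed positions $\ell_a, \ell_b, \ell_c$, and in particular contains the pattern $123$. Since one of Vatter's four families consists entirely of $123$-avoiders --- for instance, the direct sums of two decreasing permutations --- no basis element can lie in that family, Vatter's condition fails, and $\Av(P)$ has no RIE.

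The main obstacle is the forward direction: verifying, for each of the four Vatter families and for every bipartite $P$, that some inverse of a linear extension of $P$ produces a pattern in the family. The difficulty is the interplay between the (adversarial) labeling of $P$, which fixes the positions $\mathcal{M}$ and $\mathcal{N}$, and the (restricted) choice of values placeable at those positions via reorderings within each level. I would resolve this by a case analysis on the four families, constructing for each a specific linear extension of $P$ that exploits the freedom to reorder within each of the two levels, tailored so that the resulting inverse contains the required ``staircase'' pattern.
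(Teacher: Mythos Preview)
Your overall plan matches the paper's: split into the two directions and invoke the characterization via the four juxtaposition classes $\JJ_1, \JJ_2, \JJ_3, \JJ_4$. The forward direction is essentially the paper's argument---place the small values freely on the positions in the lower level and the large values freely on the positions in the upper level, choosing each level to be increasing or decreasing so as to land in each $\JJ_i$ in turn. (One terminological slip: the Albert--Linton--Ru\v{s}kuc condition is that some basis element \emph{belongs to} each $\JJ_i$, not that it \emph{contains a pattern from} some family; your phrasing drifts toward the latter in a couple of places, but the construction you describe delivers the former directly, and no case analysis beyond the four increasing/decreasing choices is needed.)

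Your reverse direction, however, has a real gap. You correctly derive that every basis element $\beta$ satisfies $\beta(\ell_a) < \beta(\ell_b) < \beta(\ell_c)$, but you then conclude that $\beta$ contains the pattern $123$. This does not follow: the labels $\ell_a, \ell_b, \ell_c \in \{1, \ldots, k\}$ are \emph{positions} in $\beta$, and nothing forces $\ell_a < \ell_b < \ell_c$ as integers. The pattern formed at these three positions (read in increasing positional order) is a fixed size-$3$ permutation common to all basis elements, but which one it is depends on the relative order of the three labels. In the paper's own example, the chain $3 < 1 < 5$ forces the pattern $213$ at positions $1, 3, 5$, and it is $\JJ_4$ (not the $123$-avoiding class $\JJ_2$) that contains no basis element. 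The paper's remedy is exactly this observation: the forced size-$3$ pattern, whatever it is, appears in the basis of one of the four $\JJ_i$ (since the six length-$3$ permutations are distributed among those four bases), and that particular $\JJ_i$ then contains no element of $B$.
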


In Section~\ref{section:symmetries}, we discuss symmetries of the POP containment relation, and we use Theorem~\ref{theorem:rie} to compute the enumeration of many new bipartite POP classes, extending the classifications given by Kitaev and Pyatkin. In Section~\ref{section:tilescope}, we use the Combinatorial Exploration framework of Albert, Bean, Claesson, Nadeau, Pantone, and Ulfarsson~\cite{combinatorial-exploration} to derive combinatorial specifications and compute generating functions for all POP classes of size $4$ and many non-bipartite POP classes whose posets have size $5$. Section~\ref{section:conjectures} addresses six conjectures of Gao and Kitaev~\cite{gao:partially-ordered-patterns}. Within Subsection~\ref{subsection:gk4}, we prove a conjecture of Chen and Lin~\cite{chen:length-5-patts-and-inv-201-210} regarding the enumeration of nine permutation classes.

\section{Bipartite POP Classes Have Regular Insertion Encodings}
\label{section:rie}

The insertion encoding, introduced by Albert, Linton, and Ru\v{s}kuc~\cite{albert:insertion-encoding}, is a language-theoretic approach that describes how a permutation is built by repeatedly adding a new maximum element. When this language is regular, Vatter~\cite{vatter:regular-insertion-encoding} gave an algorithm for computing the rational generating function. A simple test determines whether a permutation class has a regular insertion encoding. In order to describe the test, we need the following definition.

The \emph{vertical juxtaposition} of two classes $\CC$ and $\DD$ is the set of permutations for which there is some value $i$ such that the entries strictly below $i$ form a permutation in $\CC$ and the entries above and including $i$ form a permutation in $\DD$. For example, Figure~\ref{figure:juxta} shows the four juxtapositions relevant to this work. The vertical juxtaposition $\JJ_1$ contains the set of permutations consisting of an increasing sequence on top of an increasing sequence. The juxtaposition of two permutation classes is a permutation class, and Atkinson~\cite{atkinson:restricted-perms} gave a method for computing its basis. This method shows $\JJ_1 = \Av(321, 2143, 2413)$, $\JJ_2 = \Av(123, 3142, 3412)$, $\JJ_3 = Av(132, 312)$, and $\JJ_4 = \Av(213, 231)$.

\begin{figure}
	\begin{center}
		\begin{tikzpicture}
			\draw[ultra thick, lightgray] (0,0) grid (1,2);
			\draw[ultra thick, black] (0.15, 0.15) -- (0.85, 0.85);
			\draw[ultra thick, black] (0.15, 1.15) -- (0.85, 1.85);
			\node at (0.5, -0.35) {$\JJ_1$};
		\end{tikzpicture}
		\qquad
		\begin{tikzpicture}
			\draw[ultra thick, lightgray] (0,0) grid (1,2);
			\draw[ultra thick, black] (0.15, 0.85) -- (0.85, 0.15);
			\draw[ultra thick, black] (0.15, 1.85) -- (0.85, 1.15);
			\node at (0.5, -0.35) {$\JJ_2$};
		\end{tikzpicture}
		\qquad
		\begin{tikzpicture}
			\draw[ultra thick, lightgray] (0,0) grid (1,2);
			\draw[ultra thick, black] (0.15, 0.85) -- (0.85, 0.15);
			\draw[ultra thick, black] (0.15, 1.15) -- (0.85, 1.85);
			\node at (0.5, -0.35) {$\JJ_3$};
		\end{tikzpicture}
		\qquad
		\begin{tikzpicture}
			\draw[ultra thick, lightgray] (0,0) grid (1,2);
			\draw[ultra thick, black] (0.15, 0.15) -- (0.85, 0.85);
			\draw[ultra thick, black] (0.15, 1.85) -- (0.85, 1.15);
			\node at (0.5, -0.35) {$\JJ_4$};
		\end{tikzpicture}
	\end{center}
	\caption{The four vertical juxtapositions used to characterize permutation classes with a regular insertion encoding.}
	\label{figure:juxta}
\end{figure}
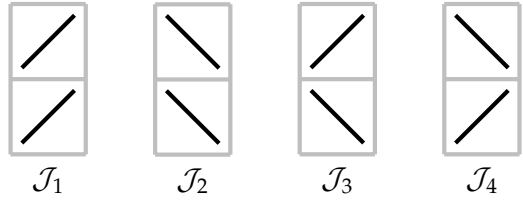
\begin{theorem}[{\cite[Proposition 13]{albert:insertion-encoding}}]
	\label{theorem:rie-condition}
	A class $\CC = \Av(B)$ has a regular insertion encoding if and only if each of the four permutation classes $\JJ_1$, $\JJ_2$, $\JJ_3$, and $\JJ_4$ contains at least one basis element $\beta$ from $B$.
\end{theorem}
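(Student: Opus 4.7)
The plan rests on a standard equivalence from the same paper of Albert, Linton, and Ru\v{s}kuc: the insertion-encoding language of $\CC=\Av(B)$ is regular if and only if there is a uniform bound on the number of slots appearing in any reachable configuration. This holds because a bounded slot count admits only finitely many combinatorial shapes of configurations (hence a finite-state automaton), while an unbounded slot count forces an infinite state space. Taking this equivalence as the working criterion, I prove the two directions of the biconditional separately.

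For the forward direction, suppose $B\cap\JJ_i=\emptyset$ for some $i$, so $\JJ_i\subseteq\CC$; take $i=1$ (the other cases are symmetric, obtained by reversing or complementing). Starting from the initial configuration $\square$ and applying the middle operation $m$ to the rightmost slot $k$ times yields the configuration $\square\,1\,\square\,2\,\square\cdots\square\,k\,\square$, which has $k+1$ slots. This configuration is reachable in $\CC$'s automaton because one valid completion---fill each slot with a single future entry in left-to-right increasing value order---produces the permutation $(k{+}1)\,1\,(k{+}2)\,2\cdots(2k)\,k\,(2k{+}1)$, which splits by value into two increasing subsequences and thus lies in $\JJ_1\subseteq\CC$. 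Letting $k\to\infty$ gives unbounded slot counts, as required.

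For the reverse direction, suppose $\beta_i\in B\cap\JJ_i$ exists for every $i$, and set $N=\max_i|\beta_i|$. I claim every reachable configuration has at most $N^2+1$ slots. Suppose for contradiction that a reachable configuration $C=\tau_0\,\square\,\tau_1\,\square\cdots\square\,\tau_m$ has $m\geq N^2+2$ slots. A direct induction on operations ($f$, $l$, $r$, $m$) shows that no reachable configuration contains two adjacent slots, so each interior block $\tau_1,\dots,\tau_{m-1}$ is nonempty; let $f_j$ denote the first entry of $\tau_j$. By the Erd\H{o}s--Szekeres theorem, the sequence $f_1,\dots,f_{m-1}$ has a monotone subsequence $f_{j_1},\dots,f_{j_\ell}$ of length $\ell\geq N$; assume it is increasing (the decreasing case is symmetric, using $\beta_2$ or $\beta_3$ in place of $\beta_1$ or $\beta_4$). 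Construct a completion of $C$ by filling the $\ell$ slots $\square_{j_1},\dots,\square_{j_\ell}$ (each immediately to the left of the corresponding $f_{j_s}$) with future entries chosen so that a prefix of $a_1$ of the selected $f_{j_s}$ plus $b_1$ appropriately-distributed high entries realize $\beta_1$'s interleaved structure on those $|\beta_1|=a_1+b_1$ entries, inserting high values in left-to-right increasing value order so the high part is globally increasing, and closing all remaining slots with single-entry fills. The selected sub-pattern matches $\beta_1$ exactly, so the completion contains $\beta_1$. But the completion lies in $\CC=\Av(B)$ and must therefore avoid $\beta_1\in B$, a contradiction.

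The main obstacle is the embedding step in the reverse direction: verifying that the interleaved position pattern of $\beta_1$ can indeed be realized given the chosen low entries and available slots. The key enabling structural facts are that selecting one entry per interior block guarantees at least one slot between any two consecutive selected entries, and that within a single slot an arbitrary sequence of high entries can be inserted in any desired internal pattern. Together with Erd\H{o}s--Szekeres on the interior-block representatives, these facts yield the desired polynomial bound on slot counts and hence finiteness of the insertion-encoding automaton.
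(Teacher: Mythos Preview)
The paper does not prove this theorem; it is quoted from Albert, Linton, and Ru\v{s}kuc and used as a black box. There is therefore no in-paper argument to compare against, and I assess your attempt on its own merits.

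Your forward direction is essentially correct: exhibiting one completion in $\JJ_i\subseteq\CC$ is exactly what certifies that the many-slot configuration is reachable. (The appeal to ``reversing or complementing'' is a bit glib---the insertion encoding is not literally invariant under complement---but each of the four cases is easily handled directly by choosing which slot receives the $m$ operation and in which order the slots are filled.)

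The reverse direction, however, has a genuine logical gap. You assume a configuration $C$ with many slots is reachable, then \emph{construct} a particular completion designed to contain $\beta_1$, and finish with ``the completion lies in $\CC=\Av(B)$ and must therefore avoid $\beta_1$''. But reachability of $C$ only tells you that \emph{some} completion of $C$ lies in $\CC$; it says nothing about the specific completion you built. Indeed, your completion contains $\beta_1\in B$ by construction, so it certainly does \emph{not} lie in $\Av(B)$, and no contradiction arises. The argument proves only that $C$ has a completion outside $\CC$, which is not useful.

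The repair requires inverting the logic: start from the completion $\pi\in\CC$ that witnesses reachability and show that $\pi$ itself must contain some $\beta_i$. Now you have no control over the high entries, so you must analyse what $\pi$ actually places in the slots. Choosing one low representative $f_j$ per interior block and one high representative $g_j$ per slot yields two position-interleaved sequences; Erd\H{o}s--Szekeres must be applied to \emph{each} (first to the $f_j$, then to the $g_j$ restricted to the surviving indices) to obtain a common index set on which both are monotone. The resulting alternating pattern of length $2k$ lies in one of the $\JJ_i$ and, for $k\geq N$, contains every $\JJ_i$-pattern of size at most $N$, in particular $\beta_i$. This double application of Erd\H{o}s--Szekeres also shows why your slot bound $N^2+1$ is too optimistic: the argument actually requires a bound on the order of $N^4$.
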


In other words, the class $\CC = \Av(B)$ has a regular insertion encoding if among the elements of $B$, at least one is the juxtaposition of two increasing sequences, at least one is the juxtaposition of two decreasing sequences, at least one is the juxtaposition of an increasing sequence on top of a decreasing sequence, and at least one is the juxtaposition of a decreasing sequence on top of an increasing sequence.

We prove Theorem~\ref{theorem:rie} in two parts.
\begin{theorem}
	\label{theorem:rie-to-bip}
	If a POP class has a regular insertion encoding, its associated poset is bipartite.
\end{theorem}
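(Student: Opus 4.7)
The plan is to prove the contrapositive: if $P$ has a chain of three elements, then $\Av(P)$ does not have a regular insertion encoding. Theorem~\ref{theorem:basis} identifies the basis $B$ of $\Av(P)$ as the set of inverses of the linear extensions of $P$, and Theorem~\ref{theorem:rie-condition} reduces the task to exhibiting a single one of $\JJ_1, \JJ_2, \JJ_3, \JJ_4$ that contains no element of $B$.

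Fix a three-element chain $a <_P b <_P c$ in $P$, where $a, b, c \in \{1,\ldots,k\}$ are specific labels. For any linear extension $L$ of $P$, the positions of $a$, $b$, and $c$ in $L$ must be in increasing order, so if $\sigma$ denotes the inverse of $L$ (viewed as a permutation) then $\sigma(a) < \sigma(b) < \sigma(c)$. Now sort $\{a,b,c\}$ as integers, obtaining $q_1 < q_2 < q_3$; the relative order of $\sigma(q_1), \sigma(q_2), \sigma(q_3)$ is determined entirely by how the integer order of $\{a,b,c\}$ matches up with the poset order, and a quick enumeration shows that the six possible matchings produce the six distinct elements of $S_3$ as patterns at the positions $q_1 < q_2 < q_3$.

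The argument then closes by a six-case check: in every case each basis element of $\Av(P)$ contains the resulting size-three pattern, and this single containment suffices to exclude a juxtaposition class. Specifically, pattern $123$ excludes $\JJ_2 \subseteq \Av(123)$, pattern $321$ excludes $\JJ_1 \subseteq \Av(321)$, either of $132$ or $312$ excludes $\JJ_3 = \Av(132,312)$, and either of $213$ or $231$ excludes $\JJ_4 = \Av(213,231)$. In each case the hypothesis of Theorem~\ref{theorem:rie-condition} is violated and $\Av(P)$ cannot have a regular insertion encoding. I do not expect any genuine obstacle here: the proof is essentially a definition-chase, and the only point requiring care is the dual role of the labels of $P$ as abstract poset elements and as integers, since it is this auxiliary integer ordering on $\{a,b,c\}$ that dictates \emph{which} of the four juxtaposition classes is the one destroyed.
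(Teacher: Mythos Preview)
Your proposal is correct and follows essentially the same argument as the paper. Both prove the contrapositive by fixing a three-element chain $a <_P b <_P c$, observing that every basis element $\beta$ satisfies $\beta(a) < \beta(b) < \beta(c)$ and hence contains a fixed size-$3$ pattern at those indices, and then noting that each of the six size-$3$ permutations lies in the basis of one of $\JJ_1,\JJ_2,\JJ_3,\JJ_4$; the paper states this last step in one sentence while you spell out the six-case check explicitly.
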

\begin{proof}
	Suppose $\CC = \Av(B)$ is a POP class associated with a non-bipartite poset $P$, implying that $P$ contains a chain with three elements, $a < b < c$. Thus, for every element $\beta\in B$, we must have $\beta(a) < \beta(b) < \beta(c)$. This implies that the subword at indices $a$, $b$, and $c$ of each basis element is an occurrence of the same pattern.

	As there exists some size three permutation that is contained in every $\beta \in B$, and as all six size $3$ permutations appear in one of the bases of $\JJ_1$, $\JJ_2$, $\JJ_3$, and $\JJ_4$, it must be the case that one of these four classes contains no element of $B$. Therefore $\CC$ does not have a regular insertion encoding.
\end{proof}

For example, the class associated with the poset
\begin{tikzpicture}[scale=0.3, baseline={([yshift=-1ex]current bounding box.center)}]]
	\draw[fill] (0,0) circle (6pt) node[left] {\scriptsize $3$};
	\draw[fill] (0, 0.8) circle (6pt) node[left] {\scriptsize$1$};
	\draw[fill] (-0.7, 1.6) circle (6pt) node[above] {\scriptsize$2$};
	\draw[fill] (0, 1.6) circle (6pt) node[above] {\scriptsize$4$};
	\draw[fill] (0.7,1.6) circle (6pt) node[above] {\scriptsize$5$};
	\draw[ultra thick] (0, 0) -- (0, 1.6);
	\draw[ultra thick] (0, 0.7) -- (-0.7, 1.5);
	\draw[ultra thick] (0, 0.7) -- (0.7, 1.5);
\end{tikzpicture}
has a basis containing the six permutations of size $5$ that have the value $1$ at index $3$, have the value $2$ at index $1$, and the values $3$, $4$, and $5$ in any order at the indices $2$, $4$, $5$; the six permutations are $23145$, $23154$, $24135$, $24153$, $25134$, and $25143$. Now consider the chain $3 < 1 < 5$. The existence of this chain implies that the subword $\beta(1)\beta(3)\beta(5)$ is an occurrence of $213$ in all six basis elements. Since $213$ is a basis element of $\JJ_4$, it follows that $\JJ_4$ does not contain any of the six basis elements.

\begin{theorem}
	\label{theorem:bip-to-rie}
	A POP class whose associated poset is bipartite has a regular insertion encoding.
\end{theorem}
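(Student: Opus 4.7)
The plan is to verify the condition of Theorem~\ref{theorem:rie-condition}: for a bipartite POP class $\Av(P)$ with basis $B$, I must exhibit a basis element in each of $\JJ_1, \JJ_2, \JJ_3, \JJ_4$. By Theorem~\ref{theorem:basis}, every basis element has the form $\perm(L)$ for some linear extension $L = \ell_1 \ell_2 \cdots \ell_k$ of $P$, so it suffices to construct four linear extensions whose $\perm$-images land in the four juxtaposition classes.

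The first step is to translate the condition $\perm(L) \in \JJ_j$ into a statement about the list $L$ itself. Since $\perm(L)(\ell_i) = i$, the entries of $\perm(L)$ whose values lie in $\{1, \ldots, m\}$ occupy positions $\ell_1, \ldots, \ell_m$ (in that order of value), and those with values in $\{m+1, \ldots, k\}$ occupy positions $\ell_{m+1}, \ldots, \ell_k$. Reading $\perm(L)$ left to right, the restriction to the low (respectively high) values therefore inherits the monotonicity of $\ell_1, \ldots, \ell_m$ (respectively $\ell_{m+1}, \ldots, \ell_k$). Hence $\perm(L)$ lies in the vertical juxtaposition $\JJ_j$ with threshold $m$ precisely when the prefix $\ell_1 \cdots \ell_m$ and the suffix $\ell_{m+1} \cdots \ell_k$ are each monotone, in the pattern prescribed by $\JJ_j$.

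The second step uses bipartiteness. Partition the labels of $P$ into the minimal non-isolated elements $B$, the maximal non-isolated elements $T$, and the isolated elements $I$. Since $P$ has no chain of three elements, these sets are disjoint and every relation of $P$ goes from $B$ to $T$; in particular $B \cup I$ is an antichain and so is $T$. Consequently, for any linear orders $\alpha$ of $B \cup I$ and $\beta$ of $T$, the concatenation $\alpha\beta$ is a valid linear extension of $P$. Setting $m = |B \cup I|$ and choosing the pair $(\alpha, \beta)$ to be (increasing, increasing), (decreasing, decreasing), (decreasing, increasing), and (increasing, decreasing) in turn produces four linear extensions whose $\perm$-images satisfy the criteria from the first step for $\JJ_1$, $\JJ_2$, $\JJ_3$, and $\JJ_4$ respectively.

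The subtlest part is the translation step in the first paragraph, where one must carefully track the inversion built into the definition of $\perm$ together with the "bottom below, top above" convention of a vertical juxtaposition to avoid reversing a direction. Once that bookkeeping is done, the structural step is essentially the observation that a height-$2$ poset imposes no constraint on the internal orderings of its two layers, which makes the four constructions immediate.
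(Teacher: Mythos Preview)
Your proof is correct and follows essentially the same approach as the paper: exhibit one basis element in each of $\JJ_1,\JJ_2,\JJ_3,\JJ_4$ by splitting the poset into its two layers and independently ordering each layer monotonically in the four possible ways. The only cosmetic differences are that the paper puts the isolated elements with the upper layer rather than the lower, and it constructs the four basis permutations directly (values $\{1,\ldots,k\}$ at the lower indices, $\{k+1,\ldots,n\}$ at the rest) without routing through the $\perm$ map and the prefix/suffix translation you spell out; also, be aware that your use of $B$ for the set of bottom elements clashes with the paper's use of $B$ for the basis.
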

\begin{proof}
	Let $\CC = \Av(B)$ be a POP class with associated bipartite poset $P$ with $n$ elements. Let $L$ denote the elements of $P$ that are the lower elements in a chain of size two and let $U$ denote the remaining elements. Assume $|L| = k$. We will construct, for each class $\JJ_1$, $\JJ_2$, $\JJ_3$, and $\JJ_4$, a linear extension of $P$ whose corresponding permutation is in the class.

	Any permutation in which the entries at the indices in $L$ have the values $\{1, \ldots, k\}$ and the entries at the indices in $U$ have the values $\{k+1, \ldots, n\}$ is in $B$. (Note that $B$ contains other permutations as well.) The permutation in which the entries at indices in $L$ are increasing, and the entries at indices in $U$ are increasing is in $\JJ_1$. Similarly, by swapping one or both of the increasing sequences for a decreasing sequence, we find permutations in $B$ from each of $\JJ_2$, $\JJ_3$, and $\JJ_4$. Therefore, by Theorem~\ref{theorem:rie-condition}, $\CC$ has a regular insertion encoding.
\end{proof}

We illustrate this proof with an example as well. Let $\CC$ be the POP class with the associated poset
\begin{tikzpicture}[scale=0.3, baseline={([yshift=-0.5ex]current bounding box.center)}]]
	\draw[fill] (0,0) circle (6pt) node[below] {\scriptsize $2$};
	\draw[fill] (0.4, 1) circle (6pt) node[above] {\scriptsize $5$};
	\draw[fill] (0.8,0) circle (6pt) node[below] {\scriptsize $4$};
	\draw[fill] (1.2, 1) circle (6pt) node[above] {\scriptsize $1$};
	\draw[fill] (1.6,0) circle (6pt) node[below] {\scriptsize $6$};
	\draw[fill] (2.4, 0) circle (6pt) node[below] {\scriptsize $3$};
	\draw[ultra thick] (0,0) -- (0.4,1) -- (0.8,0)--(1.2,1)--(1.6,0);
\end{tikzpicture}.
In this case $L = \{2,4,6\}$ and $U = \{1, 3, 5\}$. We then construct the four permutations $415263 \in \JJ_1$, $615243 \in \JJ_2$, $435261 \in \JJ_3$,  and $635241 \in \JJ_4$. These are the four permutations with the entries $123$ or $321$ at the indices in $L$ and the entries $456$ or $654$ at the indices in $U$.

Theorems~\ref{theorem:rie-to-bip} and~\ref{theorem:bip-to-rie} together prove our main result, Theorem~\ref{theorem:rie}.

Vatter~\cite{vatter:regular-insertion-encoding} describes an algorithm to determine the generating function of any class with a regular insertion encoding. The Combinatorial Exploration framework \cite{combinatorial-exploration}, which we discuss in the next section, can perform this computation more efficiently. Consequently, we can use Theorem~\ref{theorem:rie} to rederive some of the enumerative results in \cite{gao:partially-ordered-patterns}, and \cite{kitaev:pop2} in a uniform manner and address several of their open questions.

In light of Theorem~\ref{theorem:rie} it is natural to wonder if perhaps tripartite POPs (those whose longest chain has length $3$) have some similarly nice characterization of their generating functions. This is unlikely, given the class $\Av(1234, 1324) = \Av\left(\begin{tikzpicture}[scale=0.3, baseline={([yshift=-0.5ex]current bounding box.center)}]
			\draw[fill] (0,0) circle (6pt) node[left] {\footnotesize $1$};
			\draw[fill] (-0.7, 0.8) circle (6pt) node[left] {\footnotesize $2$};
			\draw[fill] (0.7, 0.8) circle (6pt) node[right] {\footnotesize $3$};
			\draw[fill] (0,1.6) circle (6pt) node[above=1pt, right] {\footnotesize $4$};
			\draw[ultra thick] (0, 0) -- (-0.7, 0.8) -- (0,1.6);
			\draw[ultra thick] (0, 0) -- (0.7, 0.8) -- (0,1.6);
			\useasboundingbox ([xshift=0.5in,yshift=0.5in]current bounding box.south west) rectangle ([xshift=-0.5in,yshift=-0.5in]current bounding box.north east);
		\end{tikzpicture}\right)$ which is conjectured by Albert, Homberger, Pantone, Shar, and Vatter~\cite{albert:c-machines} to have a non-D-finite generating function.

\section{Equivalences of POP Classes}
\label{section:symmetries}

\subsection{Symmetric Equivalence}
\label{subsection:symmetry}

The \emph{reverse} of a permutation $\pi = \pi(1)\pi(2)\cdots\pi(n)$ is the permutation $\pi^R = \pi(n)\cdots\pi(2)\pi(1)$, the \emph{complement} of $\pi$ is the permutation $\pi^C$ defined by $\pi^C(i) = n+1-\pi(i)$, and we have already discussed the \emph{inverse} $\pi^{-1}$. These three operations preserve pattern containment, e.g., if $\sigma$ is contained in $\pi$, then $\sigma^R$ is contained in $\pi^R$. We extend these operations to permutation classes by defining, for example, $\CC^R = \{\pi^R : \pi \in \CC\}$. If $\CC = \Av(B)$, then $\CC^R = \Av(\{\beta^R : \beta \in B\})$, and similarly for complement and inverse.

If $\CC$ can be obtained from $\DD$ by some sequence of these three operations, then we say that $\CC$ is \emph{symmetrically equivalent} to $\DD$, and it's easy to see that in this case $\CC$ and $\DD$ have the same counting sequence. The resulting equivalence classes have sizes $1$, $2$, $4$, or $8$.

While there are $2^{n!}$ permutation classes avoiding a (possibly empty) subset of permutations of size $n$, the number of equivalence classes under symmetry is, of course, smaller, approximately one-eighth the size. Starting with $n=1$, the counting sequence starts $2, 3, 21, 2139264$, and the OEIS sequence A277086 gives a triangle of numbers that refines these numbers by the size of the basis; the numbers above is the sequence of row sums of that triangle. When restricting only to those permutation classes that are POP classes, the counting sequence for the number of equivalence classes under symmetry starting with $n=1$ is $1, 2, 7, 64, 1068, 32651$. We added this sequence to the OEIS as entry A366705, but have not found a formula to quickly compute further terms.

Gao and Kitaev~\cite{gao:partially-ordered-patterns} give two operations on a poset $P$ that produce a new poset $P'$ such that that $\Av(P)$ and $\Av(P')$ are symmetrically equivalent. First, complementing the labels of a poset $P$ of size $n$, by which we mean replacing each label $i$ with $n+1-i$, gives a poset $P'$ such that $\Av(P') = \Av(P)^R$. Second, forming $P'$ by reversing each relation of $P$ (i.e., if $a < b$ in $P$, then $b < a$ in $P'$), we have that $\Av(P') = \Av(P)^C$. These operations are insufficient to generate the complete equivalence classes under symmetry of each POP class. For example, the class $\CC = \Av(1342, 1432)$ is a POP class with poset $P = \begin{tikzpicture}[scale=0.3, baseline={([yshift=-0.5ex]current bounding box.center)}]
		\draw[fill] (0,0) circle (6pt) node[left] {\footnotesize $1$};
		\draw[fill] (0, 0.8) circle (6pt) node[left] {\footnotesize $4$};
		\draw[fill] (-0.7, 1.6) circle (6pt) node[left] {\footnotesize $2$};
		\draw[fill] (0.7,1.6) circle (6pt) node[right] {\footnotesize $3$};
		\draw[ultra thick] (0, 0) -- (0, 0.8);
		\draw[ultra thick] (0, 0.8) -- (-0.7, 1.6);
		\draw[ultra thick] (0, 0.8) -- (0.7, 1.6);
	\end{tikzpicture}$. All eight permutation classes in the symmetry class are POP classes. In particular, $\CC^{-1} = \Av(1423, 1432)$ corresponds to the poset $\begin{tikzpicture}[scale=0.3, baseline={([yshift=-0.5ex]current bounding box.center)}]
		\draw[fill] (0,0) circle (6pt) node[left] {\footnotesize $1$};
		\draw[fill] (-0.7, 0.8) circle (6pt) node[left] {\footnotesize $3$};
		\draw[fill] (0.7, 0.8) circle (6pt) node[right] {\footnotesize $4$};
		\draw[fill] (0,1.6) circle (6pt) node[above=1pt, right] {\footnotesize $2$};
		\draw[ultra thick] (0, 0) -- (-0.7, 0.8) -- (0,1.6);
		\draw[ultra thick] (0, 0) -- (0.7, 0.8) -- (0,1.6);
	\end{tikzpicture}$, which does not correspond to either of the two operations described by Gao and Kitaev. We therefore pose the following question.

\begin{question}
	When is the inverse of a POP class also a POP class? When it is, is there an operation on posets that can be defined in a uniform way to transform the poset of one class into the poset of its inverse, without an intermediate translation into a set of permutations.
\end{question}

This question is trivial for classes defined by avoiding a single permutation since these POP classes are defined by posets that are chains. However, $\Av(123, 132, 231)$ is a POP class, while its inverse $\Av(123, 132, 312)$ is not.

\subsection{Wilf Equivalence}

Two permutation classes are \emph{Wilf-equivalent} if they have the same number of permutations of each size. Clearly, symmetrically equivalent classes are Wilf-equivalent, but there are many examples of non-symmetrically equivalent classes that are also Wilf-equivalent, a classic example being $\Av(123)$ and $\Av(132)$. When we are interested in computing the generating function of many POP classes, we can therefore restrict ourselves to one POP class from each symmetry class.

Gao and Kitaev~\cite{gao:partially-ordered-patterns} noted several examples of Wilf-equivalence among non-symmetrically equivalent POP classes, and later Kitaev and Pyatkin~\cite{kitaev:pop2} provided a complete classification of the Wilf-equivalence classes of POP classes whose poset has the shape $\begin{tikzpicture}[scale=0.4, baseline={([yshift=-0.5ex]current bounding box.center)}]
		\draw[fill] (0,0) circle (6pt);
		\draw[fill] (0,0.8) circle (6pt);
		\draw[fill] (0.8,0) circle (6pt);
		\draw[fill] (0.8,0.8) circle (6pt);
		\draw[ultra thick] (0, 0) -- (0, 0.8) -- (0.8, 0) -- (0.8,0.8);
	\end{tikzpicture}$, which they call $N$-patterns. They later ask whether a similar classification can be performed for $M$-patterns, those with posets of the shape $\begin{tikzpicture}[scale=0.4, baseline={([yshift=-0.5ex]current bounding box.center)}]
		\draw[fill] (0,0) circle (6pt);
		\draw[fill] (0.4,0.8) circle (6pt);
		\draw[fill] (0.8,0) circle (6pt);
		\draw[fill] (1.2,0.8) circle (6pt);
		\draw[fill] (1.6,0) circle (6pt);
		\draw[ultra thick] (0, 0) -- (0.4, 0.8) -- (0.8, 0) -- (1.2,0.8) -- (1.6,0);
	\end{tikzpicture}$. As these posets are all bipartite, Theorem~\ref{theorem:rie} shows that their generating functions can be algorithmically computed, and this extends to what we will call \emph{zig-zag classes} of arbitrary size, those whose poset has shape $\begin{tikzpicture}[scale=0.4, baseline={([yshift=-0.5ex]current bounding box.center)}]
		\draw[fill] (0,0) circle (6pt);
		\draw[fill] (0.4,0.8) circle (6pt);
		\draw[fill] (0.8,0) circle (6pt);
		\draw[fill] (1.2,0.8) circle (6pt);
		\draw[fill] (1.6,0) circle (6pt);
		\draw[ultra thick] (0, 0) -- (0.4, 0.8) -- (0.8, 0) -- (1.2,0.8) -- (1.6,0) -- (1.8, 0.6);
		\node at (2.6, 0.6) {\scriptsize $\bm{\cdots}$};
	\end{tikzpicture}$. If $P$ is a zig-zag poset of size $n$, then the size of the basis of the class $\Av(P)$ is the $n$th Euler number, as proved by Stanley~\cite[Exercise 3.66(c)]{stanley:ec1}.  The number of zig-zag classes of size $n$ is $n!$ when $n$ is even and $n!/2$ when $n$ is odd.

We have performed the computation necessary to determine the Wilf-equivalence classes for zig-zag classes of size $5$ ($M$-classes) and of size $6$ using the Tilescope software package~\cite{tilings-bibtex} developed by Albert, Bean, Claesson, Nadeau, Pantone, and Ulfarsson~\cite{combinatorial-exploration}, which we describe in more detail in the next section. Figure~\ref{fig:M-classes} shows the classification of the $60$ $M$-classes into 23 Wilf-equivalence classes. The $720$ zig-zag classes of size $6$ split into $177$ Wilf-equivalence classes, of which only five are non-trivial. These five are shown in Figure~\ref{fig:zz6}. A list of all Wilf-equivalence classes for size $5$ and $6$ zig-zag classes and their generating functions can be found at the Github repository \url{https://github.com/jaypantone/bipartite-pops}.

\begin{figure}\centering
	\input{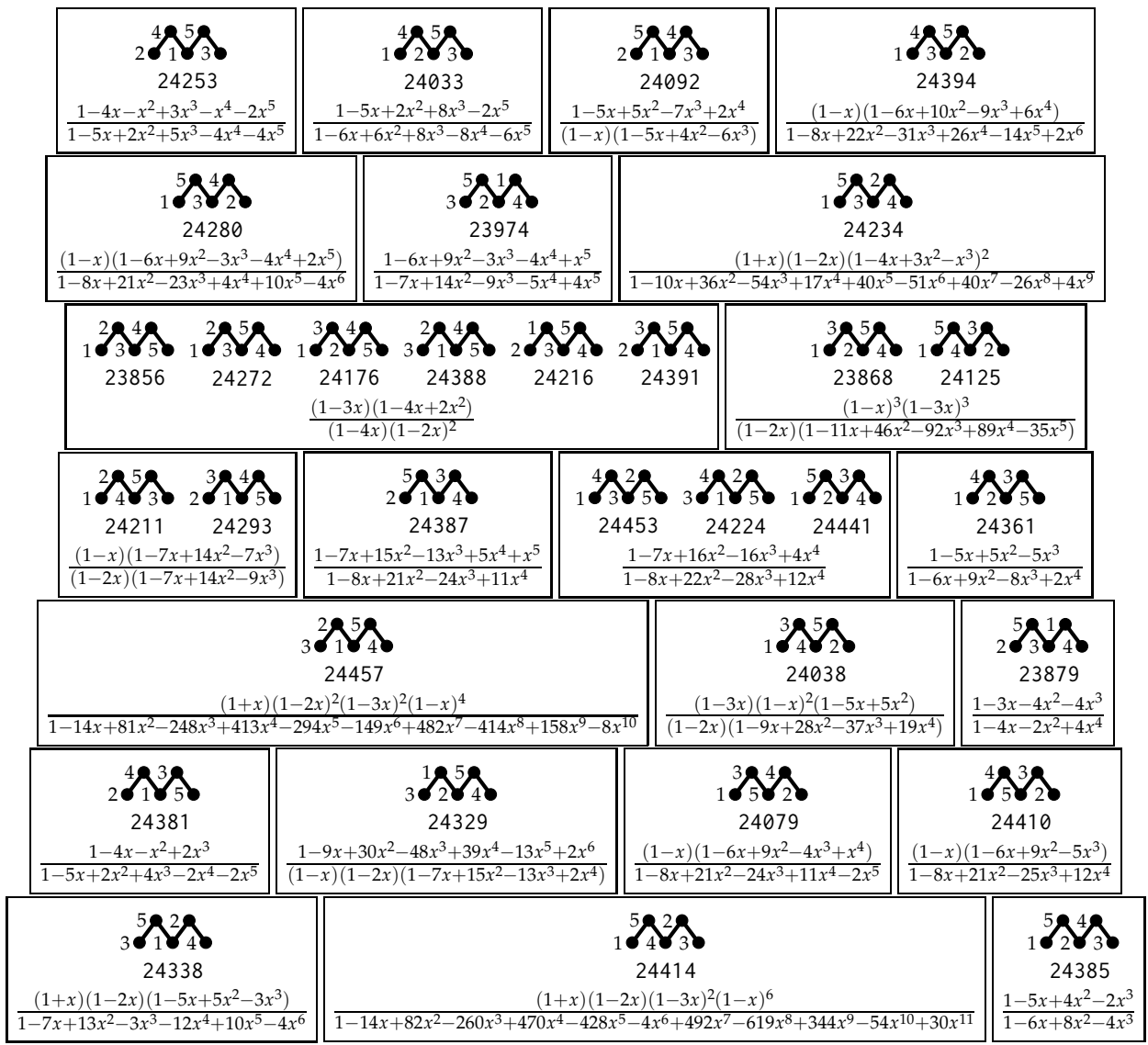}
	\caption{The Wilf-equivalence classes of size $5$ zig-zag classes. Only one class from each symmetry class is shown. Below each poset is the PermPal ID of the class.}
	\label{fig:M-classes}
\end{figure}

\begin{figure}
	\input{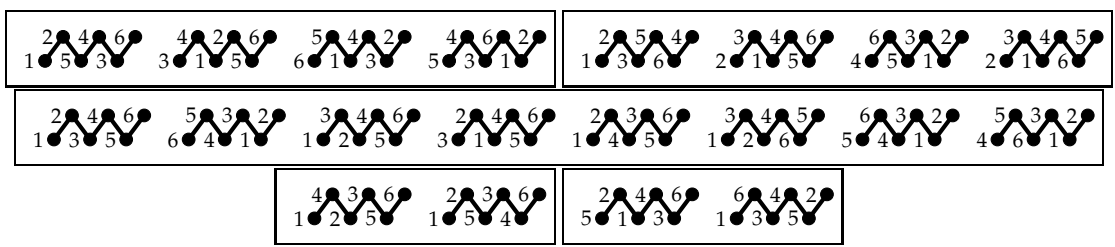}
	\caption{The nontrivial Wilf-equivalence classes of size $6$ zig-zag classes. All other Wilf-equivalence classes consist of only a single symmetry class.}
	\label{fig:zz6}
\end{figure}

In contrast to the situation for general POP classes described in Section~\ref{subsection:symmetry}, we conjecture that the two operations described by Gao and Kitaev fully describe all symmetry classes of zig-zag classes.
\begin{conjecture}
	If two zig-zag classes are symmetrically equivalent, then one is formed by  complementing the labels of the other or by reversing all of the relations of the other, or both.
\end{conjecture}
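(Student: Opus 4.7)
The plan is to translate the conjecture into a statement about how permutation inversion interacts with the bases of zig-zag POP classes, which by Theorem~\ref{theorem:basis} consist of inverses of linear extensions. I would parameterize zig-zag POPs of size $n$ by labelings $\tau \in S_n$ (with an appropriate identification when $n$ is odd to account for the palindromic symmetry of the zig-zag shape) and write $\CC_\tau = \Av(P_\tau)$. The two Gao--Kitaev operations $R$ (label complementation) and $C$ (relation reversal) act as commuting involutions on labelings, generating an $\langle R, C \rangle$-orbit of at most four zig-zag classes obtainable from $\CC_\tau$ via class-level reverse, complement, and reverse-complement. The content of the conjecture is then precisely that the remaining class-level symmetry $I$ (inversion) contributes no new zig-zag classes to the orbit of $\CC_\tau$.

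By Theorem~\ref{theorem:basis}, the basis of $\CC_\tau$ is $\{L^{-1} : L \in \mathrm{LinExt}(P_\tau)\}$, so the basis of $\CC_\tau^{-1}$ is exactly $\mathrm{LinExt}(P_\tau)$, viewed as a set of permutations. Hence $\CC_\tau^{-1}$ equals another zig-zag class $\CC_{\tau'}$ if and only if
\[
    \mathrm{LinExt}(P_\tau) \;=\; \bigl\{L^{-1} : L \in \mathrm{LinExt}(P_{\tau'})\bigr\},
\]
i.e., every linear extension of $P_\tau$ is the inverse of a linear extension of $P_{\tau'}$, and vice versa. The key step is then to show that this identity forces $\tau'$ to lie in the $\langle R, C \rangle$-orbit of $\tau$. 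A natural approach is to exploit the rigid descent structure of a linear extension of a zig-zag POP: such extensions are alternating-type sequences whose descent sets are controlled by the shape and the labeling $\tau$. Because inversion exchanges the descent statistic with a dual position-versus-value statistic, the requirement that every linear extension of $P_\tau$ have an alternating-type inverse imposes a strong simultaneous constraint on $\tau$ and $\tau'$, which should cut the admissible pairs down to exactly the four $\langle R, C \rangle$-images of $\tau$.

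The main obstacle is that inversion is a global operation that generally does not preserve the bipartite (height-two) structure of a zig-zag POP, so a single inverted linear extension almost always fails to be alternating in any zig-zag sense. A promising workaround is to argue contrapositively by using the intersection-of-total-orders recipe described just after Theorem~\ref{theorem:basis} to compute the ``candidate'' POP $Q$ for $\CC_\tau^{-1}$, and then to prove that $Q$ has height at most two only under a short, explicit list of symmetry conditions on $\tau$, each of which already realizes one of the four $\langle R, C \rangle$-images. The computational data tabulated in Figures~\ref{fig:M-classes} and~\ref{fig:zz6} already confirms the conjecture for $n \le 6$ and provides both a sanity check on the argument and a source of guiding small examples for pinning down the finite list of compatible pairs $(\tau, \tau')$; a full proof would then proceed by induction on $n$ by removing an endpoint of the zig-zag and tracking how inversion interacts with this restriction.
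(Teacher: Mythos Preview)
The statement you are addressing is an open \emph{conjecture} in the paper; the authors do not prove it and offer no argument beyond the remark that, if true, the number of symmetry classes of zig-zag classes would match OEIS A262480. So there is no ``paper's own proof'' to compare your proposal against.

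What you have written is a plan, not a proof. Your reformulation is correct: using Theorem~\ref{theorem:basis}, the conjecture is equivalent to the assertion that whenever $\CC_\tau^{-1}$ is itself a zig-zag class $\CC_{\tau'}$, the labeling $\tau'$ already lies in the $\langle R,C\rangle$-orbit of $\tau$. But having isolated the key step (``show that this identity forces $\tau'$ to lie in the $\langle R,C\rangle$-orbit of $\tau$''), you then do not carry it out. The descent-statistic heuristic and the intersection-of-total-orders computation are reasonable things to look at, but you have not shown that they actually pin down the finite list you claim; this is precisely the content of the conjecture. The proposed induction by deleting an endpoint of the zig-zag is particularly unlikely to go through cleanly: deletion of a poset element corresponds, on the basis side, to restricting each basis permutation to a subset of positions, and inversion does not commute with such restrictions in any usable way, so the inductive hypothesis does not obviously transport.

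Two smaller issues. First, Figures~\ref{fig:M-classes} and~\ref{fig:zz6} classify \emph{Wilf}-equivalence, not symmetric equivalence; they display one representative per symmetry class but do not themselves verify that those symmetry classes coincide with $\langle R,C\rangle$-orbits, so citing them as confirming the conjecture for $n\le 6$ is a misreading. Second, even granting small-case verification, the leap from $n\le 6$ to a general inductive step needs a mechanism that you have not supplied. As it stands, your proposal is a reasonable outline of where one might begin, but it does not close the gap that makes this a conjecture rather than a theorem.
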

If this conjecture is true, then the number of symmetry classes of zig-zag classes is given by sequence A262480 in the OEIS.

\section{Using Tilescope to enumerate size $5$ POP classes}
\label{section:tilescope}

Combinatorial Exploration introduced by Albert, Bean, Claesson, Nadeau, Pantone, and Ulfarsson~\cite{combinatorial-exploration} is a domain-agnostic algorithmic framework for discovering combinatorial specifications for sets of combinatorial objects. A combinatorial specification is a way of describing the structure of a set of objects, and from a specification, one can often derive many useful results, including a generating function of the set, polynomial counting algorithms, random sampling routines, and more. This framework is experimental because you are not always guaranteed to find a specification in finite time; however, it is verifiably correct once such a result is found.

The Tilescope software package~\cite{tilings-bibtex} is an implementation of Combinatorial Exploration for the field of permutation patterns. Although Tilescope is guaranteed to find a specification for every permutation class with a regular insertion encoding (which was used to produce Figures~\ref{fig:M-classes} and \ref{fig:zz6}), there is no such guarantee in general. In \cite{combinatorial-exploration} Tilescope was used to enumerate every permutation class whose basis contained only size $4$ patterns except for $\Av(1324)$, which has no known polynomial-time enumeration formula. As a result, one can consider all size $4$ POP classes to have been enumerated except $\Av(1324)$. All of the enumerative results of \cite{combinatorial-exploration} are cataloged on the Permutation Pattern Avoidance Library (PermPAL) \cite{permpal-bibtex}, which can be found at \url{https://permpal.com}.

We applied TileScope to size 5 POP classes. In total there are 4231 size 5 POP classes, but only 1068 after considering symmetric equivalence. Of these 1068 classes, 223 have a regular insertion encoding, so TileScope was able to find a specification for these, and we were able to compute the rational generating functions for these POPs. For the remaining 845, TileScope found a specification for 590 of them. We computed the generating function for 223 of these $590$ using automatic kernel method style techniques. For the other $367$ of the $590$, we could not derive a generating function either because the system of equations has more than one catalytic variable or because our automatic methods exceeded available memory or time. Even in these cases, the specification still produces a polynomial-time algorithm that can be used to generate terms of the counting sequence.

\begin{table}
	\centering
	\begin{tblr}{
		colspec = {*{13}{Q[c,m]}}, 
		hline{1, 3} = {1pt, black}, 
				column{1} = {bg=gray!15, font=\bfseries}, 
			}
		Min. Poly. Order  & 1   & 2  & 3  & 4  & 5 & 6  & 7 & 8 & 9 & 10 & 11 & 12 \\
		\# of size 5 POPs & 223 & 93 & 42 & 32 & 8 & 22 & 7 & 6 & 7 & 5  & 0  & 1
	\end{tblr}
	\caption{The number of POPs of size $5$ with a given degree of minimal polynomial.}
	\label{tab:degree-of-minimal-polynomial}
\end{table}

Table~\ref{tab:degree-of-minimal-polynomial} shows the order of the minimal polynomials of the $446$ size $5$ POP classes for which Tilescope finds a specification and we are able to solve the resulting system of equations. For example, there are five size $5$ POP classes for which we found an algebraic generating function with minimal polynomial of order $10$. One example is $\Av(13542, \allowbreak 14523, \allowbreak 14532, \allowbreak 15324, \allowbreak 15423, \allowbreak 15432, \allowbreak 24513, \allowbreak 25314, \allowbreak 25413)$ for which the minimal polynomial is
\begin{align*}
	 & x^{7} F \left(x
	\right)^{10}-x^{6} \left(3 x +2\right) F \left(x
	\right)^{9}+x^{5} \left(3 x^{2}+6 x -1\right) F \left(x
	\right)^{8}-x^{4} \left(x -1\right) \left(x^{2}+7 x +4\right) F \left(x
	\right)^{7}                                                              \\
	 & \qquad +x^{3} \left(2 x +1\right) \left(x^{2}-3 x -2\right) F \left(x
	\right)^{6}+x^{2} \left(3 x^{3}+7 x^{2}-3 x +1\right) F \left(x
	\right)^{5}                                                              \\
	 & \qquad -x^{2} \left(3 x^{2}-4 x -3\right) F \left(x
	\right)^{4} -x \left(x +2\right) \left(x +1\right) F \left(x
	\right)^{3}+2 x^{2} F \left(x
	\right)^{2}+\left(x +1\right) F \! \left(x \right)-1.
\end{align*}

It is notable that all of the size $5$ POP classes for which we found a rational generating function are bipartite and thus have a regular insertion encoding guaranteeing this rationality. This leads us to conjecture that all POPs with a rational generating function have a regular insertion encoding.

\begin{conjecture}
	A POP class has a rational generating function if and only if it is bipartite.
\end{conjecture}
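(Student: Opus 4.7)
The ``if'' direction is already established: by Theorem~\ref{theorem:rie}, any bipartite POP class admits a regular insertion encoding, and by Vatter's algorithm~\cite{vatter:regular-insertion-encoding} the resulting generating function is rational.

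For the ``only if'' direction, I would proceed by contrapositive. Suppose $P$ is non-bipartite, so $P$ contains a chain $a < b < c$. As in the proof of Theorem~\ref{theorem:rie-to-bip}, every basis element $\beta$ of $\Av(P)$ satisfies $\beta(a) < \beta(b) < \beta(c)$, so $\beta$ contains the pattern $123$ at the fixed positions $a$, $b$, $c$. Every $123$-avoiding permutation therefore avoids every basis element, yielding the sub-class containment
\[
\Av(123) \subseteq \Av(P).
\]
Writing $F_P(x) = F_{123}(x) + H(x)$, where $H$ is the generating function of $\Av(P) \setminus \Av(123)$ and hence has nonnegative integer coefficients, the task reduces to showing that no such $H$ can combine with $F_{123}$ to produce a rational $F_P$.

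The natural obstruction is analytic: $F_{123}(x) = \tfrac{1-\sqrt{1-4x}}{2x}$ has a square-root branch-point singularity at $x = 1/4$, while a rational $F_P$ is meromorphic on $\mathbb{C}$. If $F_P$ were rational, then $H = F_P - F_{123}$ would itself be a specific sum of a rational and an algebraic function, in particular inheriting a branch point at $1/4$. A concrete strategy would be to pair this analytic fact with a structural characterization of permutation classes with rational generating functions---for example, via containment in a geometric grid class of an acyclic matrix---and to argue that no such grid-class description can accommodate a POP class whose basis is forced to lie entirely inside the up-set of the pattern $123$ at the fixed positions $a$, $b$, $c$. Failing that, one might perform a case analysis on the ``non-bipartite part'' of $P$ (the sub-poset of $P$ generated by all of its $3$-chains) to control the extra structure that $\Av(P)$ adds on top of $\Av(123)$.

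The main obstacle, and the reason this is stated as a conjecture, is precisely this last step. The containment $\Av(123) \subseteq \Av(P)$ on its own is much too weak to preclude rationality: a POP class may be vastly larger than $\Av(123)$ and have growth rate strictly exceeding $4$, in which case the Catalan-type asymptotics are swallowed by coarser exponential-polynomial behavior and cannot be ``read off'' from $F_P$ alone, and nonnegative integer sequences can carry $n^{-3/2}$-type corrections without contradiction. Proving the conjecture therefore appears to require a genuinely new structural theorem about POP classes beyond bipartiteness, and may ultimately be a POP-restricted instance of the broader question of whether every permutation class with a rational generating function admits a regular insertion encoding---a question to which the conjecture would furnish a positive answer in the POP case.
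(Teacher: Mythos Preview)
This statement is a \emph{conjecture} in the paper, not a theorem: the paper offers no proof, only a short paragraph of discussion. Your proposal is, appropriately, not a proof either but an outline of the obstruction, and in that respect it matches the paper's treatment closely. Both you and the paper observe that the ``if'' direction is Theorem~\ref{theorem:rie}, that non-bipartiteness forces a size-$3$ avoidance class to sit inside $\Av(P)$, and that this sub-class containment is too weak on its own to rule out rationality. The paper makes this last point sharper than you do by citing Albert, Brignall, and Vatter's result that \emph{every} proper permutation class embeds in some class with rational generating function, which immediately shows that ``contains an algebraic sub-class'' cannot be the mechanism.

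One small inaccuracy: you assert specifically that $\Av(123)\subseteq\Av(P)$. The chain $a<b<c$ in $P$ forces $\beta(a)<\beta(b)<\beta(c)$ for every basis element $\beta$, but the \emph{indices} $a,b,c$ need not be in increasing numerical order, so the common pattern at those positions is some fixed size-$3$ permutation $\sigma$, not necessarily $123$ (this is exactly the point in the paper's proof of Theorem~\ref{theorem:rie-to-bip}, and the paper's discussion after the conjecture accordingly says ``$\Av(\sigma)$ where $\sigma$ is a permutation of size~$3$''). This does not affect the substance of your argument, since every $\Av(\sigma)$ with $|\sigma|=3$ is Catalan-counted, but the statement as written is not correct in general.
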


The proof of Theorem~\ref{theorem:rie-condition} shows that every permutation class that does not have a regular insertion encoding has a subclass $\Av(\sigma)$ where $\sigma$ is a permutation of size $3$. This subclass has an algebraic generating function. This, however, does not prove that such a class cannot have a rational generating function; consider for example the result of Albert, Brignall, and Vatter~\cite{albert:large-infinite-antichains} that says every proper permutation class is the subclass of a permutation class with a rational generating function.

\section{Resolving some conjectures of Gao and Kitaev}
\label{section:conjectures}

In Gao and Kitaev~\cite{gao:partially-ordered-patterns}, Table~5 lists six different POP classes of size $5$ that do not have a regular insertion encoding and for each one makes a conjecture that their counting sequence is the same as an existing OEIS sequence. We have reproduced their Table 5 below, here labeled Table~\ref{table:their-table-5}. For ease of reference, we will refer to their conjectures as Conjecture \gk{1} through Conjecture \gk{6} according to their row in the table. Through the use of Tilescope to enumerate many size 5 POP classes, we are able to refute Conjecture \gk{1}, affirm Conjectures \gk{2}, \gk{3}, \gk{4}, and \gk{5}, and provide more numerical evidence for the correctness of Conjecture \gk{6}.

\begin{table}
	\centering
	\begin{tblr}{
		colspec = {Q[c,m] *{6}{Q[c,m]}}, 
		hline{1, 3} = {1pt, black}, 
				column{1} = {bg=gray!15, font=\bfseries}, 
			}
		Poset           &
		\gkoneposet{}   &
		\gktwoposet{}   &
		\gkthreeposet{} &
		\gkfourposet{}  &
		\gkfiveposet{}  &
		\gksixposet{}     \\
		OEIS            &
		A216879         &
		A054872         &
		A118376         &
		A212198         &
		A228907         &
		A224295
	\end{tblr}
	\caption{A reproduction, in part, of Table 5 from~\cite{gao:partially-ordered-patterns} listing the six conjectures addressed here.}
	\label{table:their-table-5}
\end{table}

\subsection{Conjecture \gk{1}}

Conjecture \gk{1} involves the class $\GG_1 = \Av\Biggl(\gkoneposet{}\Biggr) = \Av(31425, \allowbreak 31524, \allowbreak 32415, \allowbreak 32514, \allowbreak 41235, \allowbreak 41325, \allowbreak 42135, \allowbreak 42315, \allowbreak 43125, \allowbreak 43215)$ and the OEIS sequence A216879 defined as the solution to an equation involving Jacobi theta functions. The conjecture that the counting sequence for $\GG_1$ is (a shift of) A216879 is incorrect. One can use a software library like Permuta \cite{permuta-bibtex} or PermLab \cite{albert:permlab} to quickly check that the number of permutations of size $10$ in $\GG_1$ is $443,592$, while the corresponding term in A216879 is $443,594$.

Furthermore, Tilescope was able to find a combinatorial specification for $\GG_1$.\pplink{24008} This allows us to determine that the generating function for $\GG_1$ is algebraic, satisfying the minimal polynomial
\[
	(4x-1)F(x)^{4}-(16x - 6)F(x)^{3}+(x^{2}+24x-13)F(x)^{2}-(16x-12) F(x)+4x-4 = 0
\]
The counting sequence for this class has been added to the OEIS as entry A366706.

\subsection{Conjecture \gk{2}}

Conjecture \gk{2} involves the class $\GG_2 = \Av\Biggl(\gktwoposet{}\Biggr) = \Av(41235, 41325, \allowbreak 42135, \allowbreak 42315, \allowbreak 43125, \allowbreak 43215)$ and the OEIS sequence A054872 defined as the number of permutations of size $n$ in the class $\Av(12345, 13245, 21345, 23145, 31245, 32145)$. Note that these two classes are not symmetrically equivalent.

Tilescope found a combinatorial specification for $\GG_2$\pplink{23934} that leads to a univariate system of 136 equations that can be solved to derive the generating function
\[
	1 + 2x - 2x^2 - x\sqrt{1-8x+4x^2}
\]
which matches the generating function given in the OEIS sequence, thus confirming Conjecture \gk{2}. Tilescope has also found a combinatorial specification for the class described by the OEIS sequence.\pplink{23656}.

The PermPAL database actually contains six additional permutation classes with this same generating function, plus two more whose generating function we could not find but whose enumeration matches up to at least size $100$, leading us to conjecture that they are also Wilf-equivalent.

Table~\ref{table:A054872} lists these ten classes, all of which turn out to be POPs.\footnote{It is not necessarily the case that if a class $\CC$ is a POP class, and $\DD$ is Wilf-equivalent to $\CC$, then $\DD$ or some symmetry of $\DD$ will be a POP. For example, $\Av(132, 231)$ is a POP, and it is Wilf-equivalent to the symmetry class $\{\Av(132, 213), \Av(231, 312)\}$, neither of which are POPs.}  On PermPAL, all classes are stored using the symmetry with the lexicographically smallest basis. That symmetry may not be a POP, so here we choose a symmetry of each that is a POP and in most cases gives the same shape. The PermPAL entry for each class can be accessed at the url ``http://permpal.com/perms/[PermPAL ID]'' where ``[PermPAL ID]'' is replaced by the ID given in the table. Note that the first row is the class in the definition of the OEIS sequence and the second to last row is the POP that Conjecture \gk{2} is concerned with.

\begin{table}
	\begin{center}
		\begin{tblr}{
			colspec= {Q[c, h]Q[c]Q[c]Q[c]},
			hlines = {black, 1pt},
			rowsep = 0pt,
			stretch = 0,
			row{1} = {font=\bfseries},
			}
			\SetRow{gray!15, rowsep=8pt}
			Poset                       & Permutation Class                               & PermPAL ID & GF known? \\
			\broom{1}{2}{3}{4}{5}       & $\Av(12345, 13245, 21345, 23145, 31245, 32145)$ & 23656      & yes       \\
			\broom{1}{2}{3}{5}{4}       & $\Av(12354, 13254, 21354, 23154, 31254, 32154)$ & 23660      & yes       \\
			\broom{1}{2}{4}{3}{5}       & $\Av(12435, 13425, 21435, 23415, 31425, 32415)$ & 23663      & yes       \\
			\widediamond{1}{2}{4}{5}{3} & $\Av(12534, 12543, 13524, 13542, 14523, 14532)$ & 23949      & no        \\
			\broom{1}{2}{5}{3}{4}       & $\Av(12453, 13452, 21453, 23451, 31452, 32451)$ & 23667      & yes       \\
			\broom{1}{2}{4}{5}{3}       & $\Av(12534, 13524, 21534, 23514, 31524, 32514)$ & 23669      & no        \\
			\broom{1}{2}{5}{4}{3}       & $\Av(12543, 13542, 21543, 23541, 31542, 32541)$ & 23671      & yes       \\
			\broom{1}{3}{4}{2}{5}       & $\Av(14235, 14325, 24135, 24315, 34125, 34215)$ & 23673      & yes       \\
			\broom{2}{3}{4}{1}{5}       & $\Av(41235, 41325, 42135, 42315, 43125, 43215)$ & 23934      & yes       \\
			\broom{1}{3}{5}{2}{4}       & $\Av(14253, 14352, 24153, 24351, 34152, 34251)$ & 23678      & yes
		\end{tblr}
	\end{center}
	\caption{The ten permutation classes known or conjectured to be counted by OEIS A054872.}
	\label{table:A054872}
\end{table}

\subsection{Conjecture \gk{3}}

Conjecture \gk{3} involves the class $\GG_3 = \Av\Biggl(\gkthreeposet{}\Biggr) = \Av(51423, \allowbreak 51432, \allowbreak 52413, \allowbreak 52431, \allowbreak 53412, \allowbreak 53421, \allowbreak 54312, \allowbreak 54321)$ and the OEIS sequence A118376 defined as the ``number of trees of weight $n$ where nodes have positive integer weights and the sum of the weights of the children of a node is equal to the weight of the node.'' Tilescope finds a specification for $\GG_3$\pplink{23703} that leads to a bivariate system of 26 equations that can be solved with kernel method techniques to find the generating function
\[
	\frac{5 - 4x -\sqrt{1 - 8x + 8x^2}}{4 - 4x}
\]
which matches (a shift of) the generating function given for A118376.

In fact, there are sixteen classes in the PermPAL database that appear to have this same enumeration, shown in Table~\ref{table:A118376}. For twelve of these, we can solve the resulting system of equations to find the generating function shown above. The remaining four have either bivariate systems of equations that are too big for us to solve or trivariate systems of equations for which no solving method is known; for these four we confirm using their specifications that their enumerations match A118376 up to size $100$.

\begin{table}
	\begin{center}
		\begin{tblr}{
			colspec= {Q[c, h]X[c, m]Q[c]Q[c]},
			hlines = {black, 1pt},
			rowsep = 0pt,
			stretch = 0,
			row{1} = {font=\bfseries},
			width = 0.9\linewidth
			}
			\SetRow{gray!15, rowsep=8pt}
			Poset                   & Permutation Class & PermPAL ID & GF known? \\
			\treepop{1}{2}{4}{3}{5} &
			$\Av(12345, 12435, 13425, 21345,$
				\newline \hspace*{14pt} $21435, 23415, 31425, 32415)$
			                        & 23703             & yes
			\\
			\fishpop{1}{2}{3}{5}{4} &
			$\Av(12354, 12453, 13254, 13452,
				$ \newline\hspace*{14pt} $14253, 14352, 15243, 15342)$
			                        & 24143             & yes
			\\
			\treepop{1}{2}{3}{5}{4} &
			$\Av(12354, 12453, 13254, 21354,$
				\newline \hspace*{14pt} $21453, 23154, 31254, 32154)$
			                        & 23704             & yes
			\\
			\treepop{1}{2}{5}{3}{4} &
			$\Av(12354, 12453, 13452, 21354,$
				\newline \hspace*{14pt} $21453, 23451, 31452, 32451)$
			                        & 23705             & yes
			\\
			\treepop{1}{3}{4}{2}{5} &
			$\Av(13245, 14235, 14325, 23145,$
				\newline \hspace*{14pt} $24135, 24315, 34125, 34215)$
			                        & 23948             & yes
			\\
			\treepop{2}{3}{4}{1}{5} &
			$\Av(31245, 32145, 41235, 41325,$
				\newline \hspace*{14pt} $42135, 42315, 43125, 43215)$
			                        & 24088             & yes
			\\
			\fishpop{1}{2}{4}{5}{3} &
			$\Av(12534, 12543, 13524, 13542,
				$ \newline\hspace*{14pt} $14523, 14532, 15423, 15432)$
			                        & 24094             & yes
			\\
			\fishpop{3}{2}{4}{5}{1} &
			$\Av(45123, 45132, 52134, 52143,
				$ \newline\hspace*{14pt} $53124, 53142, 54123, 54132)$
			                        & 23711             & yes
			\\
			\treepop{1}{2}{4}{5}{3} &
			$\Av(12534, 12543, 13524, 21534,$
				\newline \hspace*{14pt} $21543, 23514, 31524, 32514)$
			                        & 23712             & no
			\\
			\treepop{1}{5}{2}{4}{3} &
			$\Av(13542, 14523, 14532, 23541,$
				\newline \hspace*{14pt} $24513, 24531, 34512, 34521)$
			                        & 23714             & no
			\\
			\treepop{1}{3}{5}{2}{4} &
			$\Av(13254, 14253, 14352, 23154,$
				\newline \hspace*{14pt} $24153, 24351, 34152, 34251)$
			                        & 23717             & no
			\\
			\fishpop{2}{3}{4}{5}{1} &
			$\Av(41523, 41532, 51234, 51243,
				$ \newline\hspace*{14pt} $51324, 51342, 51423, 51432)$
			                        & 23721             & yes
			\\
			\fishpop{2}{1}{4}{5}{3} &
			$\Av(21534, 21543, 31524, 31542,
				$ \newline\hspace*{14pt} $41523, 41532, 51423, 51432)$
			                        & 23723             & no
			\\
			\treepop{1}{5}{3}{2}{4} &
			$\Av(13452, 14253, 14352, 23451,$
				\newline \hspace*{14pt} $24153, 24351, 34152, 34251)$
			                        & 23724             & yes
			\\
			\treepop{2}{3}{5}{1}{4} &
			$\Av(31254, 32154, 41253, 41352,$
				\newline \hspace*{14pt} $42153, 42351, 43152, 43251)$
			                        & 24002             & yes
			\\
			\fishpop{2}{1}{3}{5}{4} &
			$\Av(21354, 21453, 31254, 31452,
				$ \newline\hspace*{14pt} $41253, 41352, 51243, 51342)$
			                        & 24376             & yes
		\end{tblr}
	\end{center}
	\caption{The sixteen permutation classes known or conjectured to be counted by OEIS A118376.}
	\label{table:A118376}
\end{table}

\subsection{Conjecture \gk{4}}
\label{subsection:gk4}

Conjecture \gk{4} involves the class $\GG_4 = \Av\Biggl(\gkfourposet{}\Biggr) = \Av(45123, \allowbreak 45213, \allowbreak 54123, \allowbreak 54213)$ and the OEIS sequence A212198 defined as the ``leading diagonal of triangle in A211321'', which itself is defined as enumerating ``marked mesh patterns of type $R_n^{(2,0,2,0)}$'' as defined by Kitaev and Remmel~\cite{kitaev:quadrant-marked-mesh}. The set of permutations that avoid marked mesh patterns of this type form the permutation class $\Av(45312, 45321, 54312, 54321)$. Martinez and Savage~\cite{martinez:inv-seqs-2} proved that this leading diagonal sequence is also the counting sequence for inversion sequences avoiding the patterns $201$ and $210$. Chen and Lin~\cite{chen:length-5-patts-and-inv-201-210} and Pantone~\cite{pantone:inv-201-210} have enumerated these, finding their generating function to be
\[
	\frac{2-x-x\sqrt{1-8 x}}{2 \left(1 - 2x + 2 x^{2}\right)}.
\]
Chen and Lin~\cite{chen:length-5-patts-and-inv-201-210} then found bijections that linked four permutation classes ($\GG_4$, the class defined by A212198, and two more classes avoiding four patterns of size five) with inversion sequences avoiding the patterns $201$ and $210$. They also conjecture that nine further classes defined by avoiding four patterns of size five have the same counting sequence.

We have determined by an exhaustive search that in addition to the thirteen classes that Chen and Lin either prove or conjecture are enumerated by A212198, there are at most ten more with the same counting sequence whose basis contains only size $5$ patterns. We have enumerated the nine classes in Chen and Lin's conjecture, as well as six of the ten new classes. For an additional one of the ten, we find a combinatorial specification that allows us to compute the first 100 terms of the counting sequence (which match A212198), but not compute the generating function. We summarize below the state of the classes now known or conjectured to be counted by A212198. For each class whose enumeration we claim, a specification can be found on PermPAL by entering the basis into the search bar. Note that we have listed the symmetry with the lexicographically minimal basis for each class.

\begin{multicols}{2}
\textbf{Enumerated in~\cite{chen:length-5-patts-and-inv-201-210}:}\\
\hspace*{0.2in} $\Av(12345, 12354, 21345, 21354)$\\
\hspace*{0.2in} $\Av(12453, 12543, 21453, 21543)$\\
\hspace*{0.2in} $\Av(13524, 13542, 31524, 31542)$\\
\hspace*{0.2in} $\Av(14352, 15342, 24351, 25341)$
	
\textbf{Conjectured in~\cite{chen:length-5-patts-and-inv-201-210}, proved here:}\\
\hspace*{0.2in} $\Av(13425, 13452, 31425, 31452)$\\
\hspace*{0.2in} $\Av(13452, 13542, 23451, 23541)$\\
\hspace*{0.2in} $\Av(13452, 13542, 31452, 31542)$\\
\hspace*{0.2in} $\Av(12435, 12453, 21435, 21453)$\\
\hspace*{0.2in} $\Av(13542, 14532, 23541, 24531)$\\
\hspace*{0.2in} $\Av(12354, 12453, 21354, 21453)$\\
\hspace*{0.2in} $\Av(13524, 15324, 23514, 25314)$\\
\hspace*{0.2in} $\Av(13425, 13524, 31425, 31524)$\\
\hspace*{0.2in} $\Av(13542, 15342, 23541, 25341)$

\textbf{Proved here:}\\
\hspace*{0.2in} $\Av(12354, 12435, 21354, 21435)$\\
\hspace*{0.2in} $\Av(12453, 12534, 21453, 21534)$\\
\hspace*{0.2in} $\Av(13524, 15324, 31524, 51324)$\\
\hspace*{0.2in} $\Av(13542, 14352, 23541, 24351)$\\
\hspace*{0.2in} $\Av(13524, 15324, 31524, 35124)$\\
\hspace*{0.2in} $\Av(13524, 14253, 23514, 24153)$

\textbf{Specification, but no generating function:}\\
\hspace*{0.2in} $\Av(12453, 14253, 21453, 41253)$

\textbf{Conjectured here:}\\
\hspace*{0.2in} $\Av(14325, 14352, 41325, 41352)$\\
\hspace*{0.2in} $\Av(13524, 23514, 25314, 31524)$\\
\hspace*{0.2in} $\Av(13542, 14352, 31542, 41352)$
\end{multicols}

A symmetry of the class $\GG_4$ is among those enumerated in~\cite{chen:length-5-patts-and-inv-201-210}. A specification can be found on PermPAL\pplink{23641} that leads to a univariate system of 22 equations that can be solved to confirm the generating function.

In total, eleven of the classes listed above are POPS: the four enumerated in~\cite{chen:length-5-patts-and-inv-201-210}, the first six conjectured by~\cite{chen:length-5-patts-and-inv-201-210}, and the first that we conjecture here but cannot find a generating function for. The ten that we can enumerate are listed in Table~\ref{table:A212198}.

\begin{table}
	\begin{center}
		\begin{tblr}{
			colspec= {Q[c, h]X[c, m]Q[c]Q[c]},
			hlines = {black, 1pt},
			rowsep = 0pt,
			stretch = 0,
			row{1} = {font=\bfseries},
			width = 0.9\linewidth
			}
			\SetRow{gray!15, rowsep=8pt}
			Poset                    & Permutation Class & PermPAL ID & GF known? \\
			\xpop{1}{2}{3}{4}{5}     &
			$\Av(12345, 12354, 21345, 21354)$
			                         & 23634             & yes                    \\
			\housepop{1}{2}{3}{5}{4} &
			$\Av(12354, 12453, 21354, 21453)$
			                         & 24403             & yes                    \\
			\xpop{1}{2}{4}{3}{5}     &
			$\Av(12435, 12534, 21435, 21534)$
			                         & 24019             & yes                    \\
			\xpop{1}{2}{5}{3}{4}     &
			$\Av(12453, 12543, 21453, 21543)$
			                         & 23641             & yes                    \\
			\xpop{1}{3}{4}{2}{5}     &
			$\Av(14235, 15234, 24135, 25134)$
			                         & 23642             & yes                    \\
			\xpop{1}{5}{2}{3}{4}     &
			$\Av(13452, 13542, 23451, 23541)$
			                         & 23643             & yes                    \\
			\housepop{1}{3}{4}{5}{2} &
			$\Av(15234, 15243, 25134, 25143)$
			                         & 23644             & yes                    \\
			\xpop{1}{3}{5}{2}{4}     &
			$\Av(14253, 15243, 24153, 25143)$
			                         & 23645             & yes                    \\
			\housepop{1}{5}{2}{4}{3} &
			$\Av(13542, 14532, 23541, 24531)$
			                         & 24212             & yes                    \\
			\xpop{1}{5}{3}{2}{4}     &
			$\Av(13542, 14532, 23541, 24531)$
			                         & 24172             & yes
		\end{tblr}
	\end{center}
	\caption{The ten POP classes known to be counted by OEIS A212198.}
	\label{table:A212198}
\end{table}

\subsection{Conjecture \gk{5}}
Conjecture \gk{5} involves the class $\GG_5 = \Av\Biggl(\gkfiveposet{}\Biggr) = \Av(45123, \allowbreak 45132, \allowbreak 45213, \allowbreak 54123, \allowbreak 54132, \allowbreak 54213)$ and (a shift of) the OEIS sequence A228907 defined as the series expansion of the generating function $F(x)$ satisfying the equation
\[
	F(x) = 1 + \sum_{n \geq 0} \frac{1 - F(x)^{2n}}{1 - F(x)}x^n,
\]
which is algebraic with minimal polynomial
\[
	x(x-1)F(x)^3 - x(x-1)F(x)^2 - (2x-1)F(x) - 1 = 0.
\]

Tilescope finds a specification for $\GG_5$\pplink{2364} that leads to a system of 20 equations with one catalytic variable that can be solved to find the shifted generating function $1 + xF(x)$ where $F(x)$ is the generating function from A228907. This is the only permutation class in the PermPAL database with this counting sequence.

\subsection{Conjecture \gk{6}}
Conjecture \gk{6} involves the class $\GG_6 = \Av\Biggl(\gksixposet{}\Biggr) = \Av(45123, \allowbreak 45213)$ and the OEIS sequence A224295 which is defined as the counting sequence of $\Av(12345, 12354)$. These two classes are not symmetries, but $\Av(12345, 12354)$ is also a POP class.

We have not been able to enumerate $\GG_6$, and so we cannot confirm this conjecture. However, Tilescope is able to find a specification for the class $\Av(12345, 12354)$ in the definition of the sequence. It leads to a system of 23 equations with \emph{two} catalytic variables, preventing us from solving it. We are able to generate the first $790$ terms of the counting sequence of this class. PermPal has an additional six classes, each avoiding two patterns of size five, whose counting sequences match that of $\Av(12345, 12354)$ up to at least the first $100$ terms.

As for the class $\GG_6$, we have computed the first $50$ terms in the counting sequence through different means, and they match the sequence A224295 up to this point, providing further evidence for this still-open conjecture.

\bibliographystyle{alpha}
\bibliography{paper.bib}

\end{document}